\documentclass[11pt]{article}

\usepackage{amsmath, amsthm, amssymb, graphicx, natbib}
\usepackage[margin = 1.2in]{geometry}

\usepackage{amssymb,natbib,amsmath}
\renewcommand{\epsilon}{\varepsilon}

\newtheorem{theorem}{Theorem}
\newtheorem{lemma}{Lemma}
\newtheorem{proposition}{Proposition}
\newtheorem{corollary}{Corollary}

\theoremstyle{remark}
\newtheorem{assumption}{Assumption}
\newtheorem{remark}{Remark}

\newcommand{\real}{\mathbb{R}}

\newcommand{\inte}{\mathbb{Z}}

\newcommand{\diam}{{\rm diam}}

\renewcommand{\S}{\mathcal{S}}

\newcommand{\dirp}{\textit{DP}}
\newcommand{\gam}{\textit{Ga}}

\newcommand{\scP}{\mathcal{P}}
\newcommand{\scQ}{\mathcal{Q}}

\newcommand{\sigmalo}{\underline{\sigma}}

\newcommand{\beq}{\begin{equation}}
\newcommand{\eeq}{\end{equation}}
\newcommand{\benum}{\begin{enumerate}}
\newcommand{\eenum}{\end{enumerate}}

\title{Adaptive Convergence Rates of a Dirichlet Process Mixture of Multivariate Normals}
\author{Surya T Tokdar \\ {\small \it Duke University}}
\date{}
 
\begin{document}

\maketitle
\begin{abstract}
It is shown that a simple Dirichlet process mixture of multivariate normals offers Bayesian density estimation with adaptive posterior convergence rates. Toward this, a novel sieve for  non-parametric mixture densities is explored, and its rate adaptability to various smoothness classes of densities in arbitrary dimension is demonstrated. This sieve construction is expected to offer a substantial technical advancement in studying Bayesian non-parametric mixture models based on stick-breaking priors. \\[5pt]
\noindent {\it Keywords:} Bayesian multivariate density estimation; Non-parametric mixture; Posterior convergence; Sieve construction; Smoothness adaptation; Stick-breaking processes.
\end{abstract}

\section{Introduction}

Asymptotic frequentist properties of Bayesian non-parametric methods have received a lot of attention in recent years. 
It is now recognized that 
a single, fully automatic Bayesian model can offer adaptive, optimal rates of convergence for large collections of  {\it true} data generating distributions, ranging over several smoothness classes. In a seminal work, \citet{vandervaart&vanzanten09} establish adaptability of rescaled Gaussian process models for non-parametric regression, classification and density estimation. \cite{rousseau10} discusses adaptive density estimation with finite beta mixtures with a hierarchical prior on the number of mixture components. \citet{kruijer.10} and \cite{dejonge.zanten10} derive similar results for finite location-scale mixture models, respectively, in density estimation and regression, again with a prior on the number of mixture components.

Quite interestingly, adaptability has not yet been established for Dirichlet process (DP) mixture of normals models for density estimation. Even rates of convergence of these models remain to be derived beyond the univariate case. This is surprising because these models are the most studied of all Bayesian non-parametric models, and have been among the firsts for which positive results about convergence of the posterior were established \citep{ghosal99, ghosal&vandervaart01, ghosal&vandervaart07}. 

The main challenge in establishing adaptability of DP mixture models and to derive rates of convergence in higher dimensions lies in constructing a suitable {\it low-entropy, high-mass sieve} on the space of non-parametric mixture densities. Such sieve constructions are an integral part of the current technical machinery for deriving rates of convergence. The sieves that have been used to study DP mixture models \citep[e.g., in][]{ghosal&vandervaart07} do not scale to higher dimensions and lack adaptability to smoothness classes \citep{wughosal10}. 

The main import of this article is to plug this gap. It is demonstrated that a novel sieve construction proposed by this author \citep[reported earlier in an yet unpublished paper][]{pati.etal.11} give the desired dimension-scalability and smoothness-adaptability. This sieve utilizes the well known stick-breaking representation of a DP \citep{sethuraman94} and can be potentially useful for studying a large class of stick-breaking mixture models beyond the DP mixtures \citep[e.g.,][]{dunsonpark08, chungdunson09, rodriguez11}. 

This sieve paves way to the following results. For independent and identically distributed observations $X_1, \cdots, X_n$ from an unknown density  $p$ on $\real^d$, posterior convergence rates are derived for a simple DP location mixture model at a true data generating density $p_0$ belonging to either a class of infinitely differentiable densities or a class of compactly supported densities with two continuous derivatives. The derived rates are minimax optimal for these classes (up to logarithmic factors), and adapt to these two classes without requiring any user intervention to select or estimate any tuning parameters.

The two classes considered here form two extremes of the classes of smooth densities. Finer rate adaptability results can be derived by looking at the intermediate classes of H\"older smooth densities. These classes have well defined minimax optimal rates associated with them. It is demonstrated that the new sieve works for all H\"older classes. However, we stop short of deriving precise rates of convergence for these classes. This derivation requires an additional calculation of prior thickness rates for a $p_0$ belonging to these classes, which is a challenging and interesting problem but is tangential to the focus of this article. Interested readers are referred to some recent developments reported in \citet{kruijer.10}.

\section{A simple DP location mixture model}
\label{model}

Let $\phi_\sigma$ denote the density of the $d$-variate normal distribution with mean zero and variance $\sigma^2 I$. For any probability measure $F$ on $\real^d$, use $p_{F, \sigma}$ to denote the mixture density
\begin{equation}
\label{def mix}
p_{F,\sigma}(x) = \int \phi_\sigma(x - z) dF(z),\;\; x \in \real^d.
\end{equation}
Assign $p$ a prior distribution $\Pi$ given by the law of the random density $p_{F, \sigma}$ when $(F, \sigma^{-d}) \sim \dirp(\alpha) \times \gam(a, b)$ where $\dirp(\alpha)$ denotes the Dirichlet process distribution \citep{ferguson73} with base measure $\alpha$ and $\gam(a,b)$ denotes the gamma distribution with shape $a$ and rate $b$.


It is useful to recall two different characterizations of DP distributions, the original characterization by \citet{ferguson73} through a consistent system of Dirichlet distributions over measurable partitions and the later stick-breaking interpretation due to \citet{sethuraman94}. The first approach characterizes an $F \sim \dirp(\alpha)$, with $\alpha$ a finite measure on $\real^d$, as:
\begin{equation}
\label{dp1}
(F(B_1), \cdots, F(B_k)) \sim \textit{Dir}(\alpha(B_1), \cdots, \alpha(B_k)).
\end{equation} 
for any Borell measurable partition $B_1, \cdots, B_k$ of $\real^d$. The stick breaking characterization says an
\begin{equation}
\label{dp2}
F = \sum_{h = 1}^\infty \pi_h \delta_{Z_h}, \;\; \pi_h =  V_h \prod_{j < h}(1 - V_j),\;\;\delta_x = \mbox{ Dirach measure at }x,
\end{equation}
has a $\dirp(\alpha)$ distribution if $\{V_h, h \ge 1\}$ are independent $\textit{Be}(1, |\alpha|)$ random variables with $|\alpha| = \alpha(\real^d)$, $\{Z_h, h \ge 1\}$ are independently distributed according to the probability measure $\bar \alpha = \alpha / |\alpha|$ and these two sets of random variables are mutually independent. 

The base measure $\bar\alpha$ gives the mean of $F$, and also determines its support. The only assumptions we make on $\bar\alpha$ are that it admits a Lebesgue density that is strictly positive over the whole of $\real^d$ and that  for some constant $b_1$, $\bar \alpha([-a,a]^d) \lesssim \exp(-b_1 a^2)$, where $f(a) \lesssim g(a)$ means $f(a) \le K g(a)$ for all $a$, for some fixed constant $K$.


\section{Posterior convergence rates and adaptability}
\label{prelim}

Consider modeling $d$-variate measurements $X_1, X_2, \cdots$ as independent observations from a density $p$, which is assigned a prior distribution $\Pi$. Here $\Pi$ is a probability measure on the space $\scP$ of Lebesgue probability densities, equipped with the Borel $\sigma$-field under a metric $\rho$, usually taken to be the $L_1$ metric $\rho(p, q) = \|p - q\|_1 = \int_{\real^d} |p(x) - q(x)|dx$ or the Hellinger metric $\rho(p, q) = h(p, q) = [\int_{\real^d}\{p^{1/2}(x) - q^{1/2}(x)\}^2]^{1/2}$. 

Let $\Pi_n(\cdot|X_1, \cdots, X_n)$ denote the posterior distribution of $p$ based on the first $n$ measurements, defined for every measurable $B \subset \scP$ as
$$\Pi_n(B | X_1, \cdots, X_n) = \frac{\int_B \prod_{i = 1}^n p(X_i)\Pi(dp)}{\int_\scP \prod_{i = 1}^n p(X_i)\Pi(dp)}.$$
Let $\{\epsilon_n\}_{n \ge 1}$ be a sequence of positive numbers with $\lim_{n \to \infty} \epsilon_n = 0$. For any $p_0 \in \scP$ we say the posterior convergence rate at $p_0$ is (not slower than) $\epsilon_n$ if for some finite constant $M$
\beq
\label{def rate}
\lim_{n \to 0} \Pi(\{p: \rho(p_0, p) > M\epsilon_n\} | X_1, \cdots, X_n) = 0
\eeq
 almost surely whenever $X_1, X_2,\cdots$ are independent and identically distributed (iid) with density $p_0$.

Although \eqref{def rate} only establishes $\{\epsilon_n\}_{n \ge 1}$ as a bound on the convergence rate, it serves as a useful calibration of the method induced by $\Pi$ for classes of true densities $p_0$ for which optimal estimation rates are known. For example, for various classes of infinitely differentiable densities the optimal rate is known to be $n^{-1/2}(\log n) ^k$ for some $k \ge 0$ \citep{ibkhas83}, whereas for the class of compactly supported, twice continuously differentiable densities, the optimal rate is known to be $n^{-2/(4 + d)}$ \citep{huang04}. A method is considered adaptive if it provides convergence rates that are within a power of $\log n$ of these optimal rates. Along this line, we present the following results.

\begin{theorem}
\label{post rate}
Let $\Pi$ be the DP mixture prior of Section \ref{model}. 
\benum
\item If $p_0$ equals $p_{F_0, \sigma_0}$ for some probability measure $F_0$ on $\real^d$ and some $\sigma_0 > 0$, then \eqref{def rate} holds with $\epsilon_n = n^{-1/2} (\log n)^{(d + 1 + s )/2}$ for every $s > 0$. Such a $p_0$ will be called a super-smooth density. 
\item If $p_0$ is compactly supported and twice continuously differentiable then \eqref{def rate} holds with $\epsilon_n = n^{-2/(4 + d)}(\log n)^{(4d + 2)/(d + 4) + s}$ for every $s > 0$. Such a $p_0$ will be called an  ordinary-smooth density.
\eenum
\end{theorem}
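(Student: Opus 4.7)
The plan is to derive both rates from the master rate theorem of Ghosal and van der Vaart (2007): given a sequence $\epsilon_n \to 0$ with $n\epsilon_n^2 \to \infty$, it suffices to exhibit sieves $\scP_n \subset \scP$ such that (i) $\log N(\epsilon_n, \scP_n, \rho) \lesssim n\epsilon_n^2$, (ii) $\Pi(\scP_n^c) \lesssim \exp\{-(C+4)n\epsilon_n^2\}$, and (iii) the KL neighborhood $\{p : \mathrm{KL}(p_0, p) \le \epsilon_n^2,\ V(p_0, p) \le \epsilon_n^2\}$ carries prior mass $\ge \exp(-Cn\epsilon_n^2)$. The entire novelty lies in (i)--(ii): a sieve that scales to $d$ dimensions and adapts to the smoothness class of $p_0$.

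The first step is to construct the sieve. Leveraging the Sethuraman representation \eqref{dp2}, for parameters $N_n, a_n, \delta_n, \sigmalo_n, \bar\sigma_n$ (to be tuned) I will define
\[
\scP_n = \bigl\{ p_{F,\sigma} : F = \textstyle\sum_{h \ge 1}\pi_h\delta_{Z_h},\ \textstyle\sum_{h > N_n}\pi_h < \delta_n,\ Z_h \in [-a_n,a_n]^d\ \text{for } h \le N_n,\ \sigma \in [\sigmalo_n, \bar\sigma_n] \bigr\}.
\]
For the entropy of $\scP_n$, I will split $p_{F,\sigma} = p_{F^{(1)},\sigma} + p_{F^{(2)},\sigma}$ along the first $N_n$ and the remaining atoms; the tail piece has $L_1$-norm at most $\delta_n$, while the head piece is an $N_n$-component location mixture on a bounded set, whose $L_1$-covering number is controlled by covering the simplex $\Delta_{N_n-1}$ in weights, the cube $[-a_n,a_n]^d$ in locations, and the log-$\sigma$ interval. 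Choosing $N_n \asymp n\epsilon_n^2/\log n$, $a_n \asymp (\log n)^{1/2}$, $\delta_n \asymp \epsilon_n$, and $\sigmalo_n, \bar\sigma_n$ at polynomial rates, the entropy is $\lesssim n\epsilon_n^2$.

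For the sieve-complement mass, each defining event has tractable prior probability: the tail-mass event reduces to the behavior of $\prod_{h\le N_n}(1-V_h)$ with $V_h \sim \mathrm{Be}(1,|\alpha|)$, yielding an exponential decay in $N_n$; the compact-support event is bounded by $N_n \bar\alpha([-a_n,a_n]^{d\,c}) \lesssim N_n \exp(-b_1 a_n^2)$; and the $\sigma$ event by standard gamma tails. The parameters will be chosen so that each piece is smaller than $\exp\{-(C+4)n\epsilon_n^2\}$.

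The remaining task is the prior thickness (iii), which is where the two cases diverge. For the super-smooth case, since $p_0 = p_{F_0,\sigma_0}$ is itself a normal mixture, I will approximate $F_0$ by a discrete measure $F^*$ with $O(\epsilon_n^{-2d})$ atoms close in Wasserstein distance, hold $\sigma$ within an $\epsilon_n$-neighborhood of $\sigma_0$, and use the classical bound $\mathrm{KL}(p_{F_0,\sigma_0}, p_{F,\sigma}) \lesssim W_2^2(F_0,F)/\sigma_0^2 + (\sigma-\sigma_0)^2/\sigma_0^2$, together with the small-ball probability of the DP at $F^*$, to obtain $n\epsilon_n^2 \asymp (\log n)^{d+1+s}$. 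For the ordinary-smooth case, I will set $\sigma_n \asymp n^{-1/(4+d)}(\log n)^\gamma$ and approximate $p_0$ by $p_{F_n,\sigma_n}$ with $F_n$ discrete: the convolution bias is $O(\sigma_n^2)$ by Taylor expansion using the two continuous derivatives of $p_0$, and the discretization error is handled as above, yielding the announced rate after optimizing $\gamma$.

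The principal obstacle will be step (i): the head/tail decomposition is necessary precisely because the DP has infinitely many atoms, and the entropy bound must absorb the cost of covering both the $N_n$-simplex of weights in $d$ dimensions and the cube $[-a_n,a_n]^d$, without the exponents in $\log n$ exploding with $d$. Getting the three tuning parameters $(N_n, a_n, \delta_n)$ to balance simultaneously against (ii) and (iii) in both smoothness regimes — with the ordinary-smooth case being the tighter of the two — is the delicate calculation that the novel sieve is designed to make possible.
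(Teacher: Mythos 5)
Your sieve is structurally the paper's $\scQ$ of Theorem \ref{basic sieve} --- truncate the stick-breaking series at $N_n$ atoms, confine those atom locations to $[-a_n,a_n]^d$, cap the tail mass, bracket $\sigma$ --- so the overall strategy is the right one and matches the paper. But the explicit parameter values you write down cannot satisfy (i) and (ii) simultaneously. With $a_n \asymp (\log n)^{1/2}$ the complement term $N_n\,\bar\alpha\bigl(([-a_n,a_n]^d)^c\bigr) \lesssim N_n e^{-b_1 a_n^2}$ is only polynomially small in $n$, whereas (ii) requires a bound of order $e^{-(C+4)n\epsilon_n^2}$; in the ordinary-smooth case $n\epsilon_n^2 \asymp n^{d/(4+d)}(\log n)^{2q}$ is a positive power of $n$, so a polynomially decaying bound is hopeless. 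The paper instead takes $a_n^2 \asymp n$ (and $\sigmalo^{-d}\asymp n$), which costs essentially nothing in (i) because the entropy only involves $dN_n\log\{a_n/(\sigmalo\epsilon_n)\}\asymp dN_n\log n \lesssim n\epsilon_n^2$ once $N_n\asymp n\epsilon_n^2/\log n$. The same problem hits a polynomial $\bar\sigma_n$: the gamma lower-tail $\Pr(\sigma^{-d}\le\bar\sigma_n^{-d})$ is then only polynomially small, which is why the paper restricts $\sigma/\sigmalo$ to the multiplicative grid $(1,(1+\epsilon_n)^n)$, absorbing an exponentially large upper scale at a $\log n$ entropy cost.

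The more serious gap is in (iii). In the super-smooth case you would approximate $F_0$ by a discrete $F^*$ with $O(\epsilon_n^{-2d})$ atoms to control a Wasserstein distance. The DP small-ball probability over $M$ cells decays like $\exp\{-cM\log(1/\epsilon_n)\}$ (Lemma \ref{lem dir}), so with $M\asymp\epsilon_n^{-2d}\asymp n^d/(\log n)^{d(d+1+s)}$ this is of order $\exp\{-c\,n^d\cdot\mathrm{polylog}(n)\}$, vastly smaller than the required $\exp(-cn\tilde\epsilon_n^2)=\exp\{-c(\log n)^{d+1}\}$. The rate only closes because of the moment-matching approximation (Theorem \ref{lem disc approx}, a $d$-variate extension of Lemma 2 of Ghosal and van der Vaart (2007)): a Gaussian convolution $p_{P_0,\sigma}$ can be $\epsilon$-approximated in $L_1$ by a discrete mixture with just $O\bigl(\{(a/\sigma)\log(1/\epsilon)\}^d\bigr)$ atoms, which is \emph{polylogarithmic} when $\sigma$ is bounded below (super-smooth) and $\asymp\epsilon^{-d/2}\{\log(1/\epsilon)\}^d$ when $\sigma\asymp\epsilon^{1/2}$ (ordinary-smooth). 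This is the missing ingredient; without it the prior-thickness bound is off by a polynomial power of $n$ in both regimes. (Your asserted $\mathrm{KL}\lesssim W_2^2/\sigma_0^2$ is also not quite right as stated --- a KL bound additionally needs a pointwise lower bound on $p_{F,\sigma}$, which the paper manufactures by forcing $F$ to put mass $\gtrsim\tilde\epsilon_n^{4db}$ on every cell --- but that is secondary to the atom-count issue.)
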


These results are proved in Sections \ref{s sieve} and \ref{s conc}. The main tool needed to  establish \eqref{def rate} is a set of sufficient conditions proposed in \citet[Theorem 2.1]{ghosal&etal00}. We present here a slightly modified version adapted from \citet[Theorem 2.1]{ghosal&vandervaart01}. In the following, for any two probability densities $p$ and $q$ and any positive number $\epsilon$, we denote $K(p, q) = \int_{\real^d} p(x)\log \{p(x)/q(x)\} dx$, $V(p, q) = \int_{\real^d} p(x)[\log \{p(x)/q(x)\}]^2 dx$, $B(\epsilon; p) = \{q \in \scP: K(p, q) \le \epsilon^2, V(p, q) \le \epsilon^2\}$. For any $\scQ \subset \scP$, its $\epsilon$-covering number $N(\epsilon, \scQ, \rho)$ is defined to be the minimum number of balls of radius $\epsilon$ (in the metric $\rho$) needed to cover $\scQ$; with $\log N(\epsilon, \scQ, \rho)$ referred to as the $\epsilon$-entropy of $\scQ$.

\begin{theorem}
\label{suff}
Let $\rho$ be the Hellinger metric on $\scP$. Suppose for positive sequences $\tilde\epsilon_n$, $\bar\epsilon_n \to 0$ with $n\min(\tilde\epsilon^2_n,\bar \epsilon^2_n) \to \infty$, there exist positive constants $c_1, c_2, c_3, c_4$ and sets $\scP_n \subset \scP$, $n \ge 1$, such that  for all large $n$
\begin{align}
\log N(\bar \epsilon_n, \scP_n, \rho) & \le c_1 n\bar\epsilon_n^2, \label{gv1}\\
\Pi(\scP_n^c) & \le c_3 e^{-(c_2 + 4)n\tilde \epsilon_n^2} ,\label{gv2}\\
\Pi(B(\tilde\epsilon_n; p_0)) & \ge c_4e^{-c_2 n\tilde\epsilon_n^2} \label{gv3}.
\end{align}
Then \eqref{def rate} holds with $\epsilon_n = \max(\tilde\epsilon_n, \bar\epsilon_n)$.
\end{theorem}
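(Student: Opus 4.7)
The plan is to follow the standard testing-based argument of \citet{ghosal&etal00}, while tracking the two rates carefully: $\tilde\epsilon_n$ enters through the prior-mass bound on $B(\tilde\epsilon_n; p_0)$ and the sieve-complement bound, while $\bar\epsilon_n$ enters through the entropy bound. Write the posterior probability of interest as $\Pi_n(A_n \mid X_{1:n}) = N_n/D_n$ with $A_n = \{p : h(p,p_0) > M\epsilon_n\}$, $D_n = \int \prod_{i=1}^n (p(X_i)/p_0(X_i))\,\Pi(dp)$, and $M$ a constant to be chosen later.

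The first step is to establish a high-probability lower bound on $D_n$. Restricting the integral to $B(\tilde\epsilon_n; p_0)$ and applying Jensen's inequality gives $\log D_n \ge \log \Pi(B(\tilde\epsilon_n; p_0)) + \sum_i \log \bar r(X_i)$, where $\bar r$ is the $\Pi$-average of $p/p_0$ over $B(\tilde\epsilon_n; p_0)$. By the definitions of $K$ and $V$, under $P_0^n$ this sum has mean at least $-n\tilde\epsilon_n^2$ and variance at most $n\tilde\epsilon_n^2$, so Chebyshev combined with \eqref{gv3} yields
\[
P_0^n\bigl(D_n < c_4 e^{-(c_2+2)n\tilde\epsilon_n^2}\bigr) \le (n\tilde\epsilon_n^2)^{-1} \to 0.
\]

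The second step is to construct exponentially consistent tests on Hellinger shells inside the sieve. Partition $A_n \cap \scP_n$ into shells $S_{n,j} = \{p \in \scP_n : j\bar\epsilon_n < h(p,p_0) \le (j+1)\bar\epsilon_n\}$ for integers $j \ge M$. Applying the Le Cam--Birg\'e construction to a minimal $\bar\epsilon_n$-cover of each shell yields tests $\phi_{n,j}$ with $E_{p_0}\phi_{n,j} \le N(\bar\epsilon_n,\scP_n,h)\,e^{-K n j^2 \bar\epsilon_n^2}$ and $\sup_{p \in S_{n,j}} E_p(1-\phi_{n,j}) \le e^{-K n j^2 \bar\epsilon_n^2}$ for a universal $K>0$. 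Setting $\phi_n = \sup_{j \ge M}\phi_{n,j}$ and choosing $K M^2 > c_1$, the entropy bound \eqref{gv1} combined with a geometric sum in $j$ gives $E_{p_0}\phi_n \to 0$.

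The final step combines the pieces. On the good denominator event,
\[
\Pi_n(A_n \mid X_{1:n}) \le \phi_n + \frac{1}{D_n}\left(\sum_{j \ge M} I_{n,j} + J_n\right),
\]
where $I_{n,j} = (1-\phi_n)\int_{S_{n,j}} \prod_i (p/p_0)(X_i)\,\Pi(dp)$ and $J_n = \int_{\scP_n^c} \prod_i (p/p_0)(X_i)\,\Pi(dp)$. By Fubini, $E_{p_0} J_n = \Pi(\scP_n^c) \le c_3 e^{-(c_2+4)n\tilde\epsilon_n^2}$ via \eqref{gv2}, which after division by $c_4 e^{-(c_2+2)n\tilde\epsilon_n^2}$ leaves a residual factor $e^{-2n\tilde\epsilon_n^2} \to 0$; and $E_{p_0} I_{n,j} \le e^{-K n j^2 \bar\epsilon_n^2}\Pi(S_{n,j})$, whose sum over $j \ge M$ contributes $O(e^{-K M^2 n\bar\epsilon_n^2 + (c_2+2)n\tilde\epsilon_n^2})$, which vanishes by enlarging $M$. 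The main delicate point is precisely this bookkeeping: the margin of $4$ (rather than just $2$) in \eqref{gv2} is exactly what guarantees that the outside-the-sieve prior mass is swamped by the denominator even when the prior-mass rate $\tilde\epsilon_n$ and the entropy rate $\bar\epsilon_n$ differ. Strengthening the resulting convergence in probability to the almost-sure statement \eqref{def rate} follows by a standard Borel--Cantelli argument using $n\min(\tilde\epsilon_n^2,\bar\epsilon_n^2) \to \infty$.
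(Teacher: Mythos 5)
The paper does not give a proof of this theorem; it is presented as a ``slightly modified version adapted from'' Theorem~2.1 of Ghosal and van der Vaart (2001), whose proof is exactly the testing-plus-denominator-lower-bound argument you sketch. Your route therefore coincides with the (implicitly) intended one, and your two-rate bookkeeping is right: the margin $c_2+4$ in \eqref{gv2}, set against the denominator bound $D_n\ge c_4 e^{-(c_2+2)n\tilde\epsilon_n^2}$, leaves the residual $e^{-2n\tilde\epsilon_n^2}$ that makes the outside-sieve mass negligible even when $\tilde\epsilon_n$ and $\bar\epsilon_n$ differ.

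Two imprecisions are worth flagging. First, after restricting to $B(\tilde\epsilon_n;p_0)$ and applying Jensen, the object that appears in the lower bound for $\log D_n$ is $\sum_i\ell(X_i)$ with $\ell(x)=\int\log\{p(x)/p_0(x)\}\,\bar\Pi(dp)$, i.e.\ the $\bar\Pi$-average of the \emph{log}-ratio, not $\sum_i\log\bar r(X_i)$ with $\bar r$ the $\bar\Pi$-average of $p/p_0$ (the latter is what Jensen bounds it \emph{by from above}, not below). It is to $\ell$ that the definitions of $K$ and $V$ give the mean bound $\ge-\tilde\epsilon_n^2$ and the variance bound $\le\tilde\epsilon_n^2$; your estimates are correct but attributed to the wrong quantity. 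Second, and more substantively, the almost-sure conclusion does not follow ``by a standard Borel--Cantelli argument using $n\min(\tilde\epsilon_n^2,\bar\epsilon_n^2)\to\infty$''. The Chebyshev bound on the bad denominator event is $(n\tilde\epsilon_n^2)^{-1}$, and $n\tilde\epsilon_n^2\to\infty$ does not make this summable (e.g.\ $n\tilde\epsilon_n^2=\log\log n$). You either need $\sum_n (n\tilde\epsilon_n^2)^{-1}<\infty$ as an additional hypothesis, a sharper-than-Chebyshev concentration bound, or a subsequence/monotonicity device. In the paper's two applications $n\tilde\epsilon_n^2\gtrsim(\log n)^{d+1}$, so the required summability does hold there, but as stated the last step of your sketch is a genuine gap relative to the hypotheses of the theorem.
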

\begin{remark}
If (\ref{def rate}) holds with $\rho = $ the Hellinger metric then it holds with $\rho =$ the $L_1$ metric, because for any two probability density $\|p - q\|_1 \le 2 h(p, q)$.
\end{remark}
It is common to call the sequence $\{\scP_n\}_{n \ge 1}$ a sieve on $\scP$. The first two conditions require existence of a low-entropy, high mass sieve. The third condition requires a quantitative bound on the thickness of the prior $\Pi$ at the true density $p_0$. We first take up the more challenging task of sieve construction for the DP mixture prior of Section \ref{model}, followed by prior thickness calculations.

\section{Sieve construction}
\label{s sieve}

\subsection{The basic construct}
The chief novelty of the sieve proposed in \citet{pati.etal.11} lies in exploiting the stick-breaking representation of a DP distribution. A high-mass, low-entropy subset of $\scP$ can be obtained by considering densities $p_{F,\sigma}$, with $F$ as given in \eqref{dp2} with limited tail mass $\sum_{h > H} \pi_h$. A precise statement is given below.

\begin{theorem}
\label{basic sieve}
Fix reals $\epsilon, a, \sigmalo > 0$ and integers $M, H \ge 1$. Define
\begin{equation} 
\label{eff}
\scQ = \bigg\{p_{F, \sigma}: F = \sum_{h = 1}^\infty \pi_h \delta_{z_h}:  z_h \in [-a,a]^d, h \le H; \sum_{h > H} \pi_h < \epsilon; 1 < \frac{\sigma}{\sigmalo} < (1 + \epsilon)^M\bigg\}. 
\end{equation} 
Then, for some positive constants $b_1, b_2$ and $b_3$, 
\begin{enumerate}
\item $\log N(\epsilon, \scQ, \rho) \lesssim dH \log\frac{a}{\sigmalo\epsilon} + H \log \frac{1}{\epsilon} + \log M$, where $\rho$ is either the $L_1$ or the Hellinger metric.
\item If $\Pi$ is the DP mixture prior of Section \ref{model}, then $\Pi(\scQ^c) \lesssim He^{-b_1a^2}  + e^{-b_2\sigmalo^{-d}} + \sigmalo^{-b_3d}(1 + \epsilon)^{-b_3dM} + \{(e|\alpha|/H)\log(1/\epsilon)\}^H$.
\end{enumerate}
\end{theorem}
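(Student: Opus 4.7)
My plan for Part 1 is to construct an explicit $O(\epsilon)$-net for $\scQ$ in the $L_1$ metric (which covers Hellinger since $h \le \sqrt{\|\cdot\|_1}$) via a four-stage discretization of $(F,\sigma)$. Given $p_{F,\sigma}\in\scQ$, I would first truncate $F$ to its first $H$ atoms, paying $L_1$ error $\sum_{h>H}\pi_h < \epsilon$; then round each retained $\pi_h$ to the nearest multiple of $\epsilon/H$, paying another $\epsilon$; then snap each $z_h$ to a grid on $[-a,a]^d$ of spacing of order $\sigmalo\epsilon$, invoking the Lipschitz estimate $\|\phi_\sigma(\cdot-z)-\phi_\sigma(\cdot-z')\|_1 \lesssim |z-z'|/\sigma$ together with $\sigma\ge\sigmalo$; and finally restrict $\sigma$ to the geometric ladder $\{\sigmalo(1+\epsilon)^k : 0\le k\le M\}$, using $\|\phi_\sigma-\phi_{\sigma'}\|_1 \lesssim |\sigma'/\sigma-1|$. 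Counting the number of quantized profiles --- roughly $(a/(\sigmalo\epsilon))^{dH}\cdot(1/\epsilon)^H\cdot M$ --- and taking logarithms reproduces the three stated entropy terms.

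For Part 2, I would decompose $\scQ^c$ into four events and bound each separately. Since the $z_h$ are i.i.d.\ from $\bar\alpha$, a union bound together with the assumed tail give $\Pr(\exists h\le H:\, z_h\notin[-a,a]^d)\lesssim He^{-b_1a^2}$. The events $\sigma<\sigmalo$ and $\sigma>\sigmalo(1+\epsilon)^M$ translate into upper and lower tails of $\sigma^{-d}\sim\gam(a,b)$: a Chernoff estimate yields the $e^{-b_2\sigmalo^{-d}}$ contribution, while the elementary inequality $\Pr(\gam(a,b)\le t)\le (bt)^a/\Gamma(a+1)$ evaluated at $t=\sigmalo^{-d}(1+\epsilon)^{-dM}$ gives the $\sigmalo^{-b_3d}(1+\epsilon)^{-b_3 dM}$ term with $b_3=a$. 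The key remaining event is $\sum_{h>H}\pi_h \ge \epsilon$; here I would use the stick-breaking identity $\sum_{h>H}\pi_h=\prod_{j=1}^H(1-V_j)$. Since $V_j\sim\textit{Be}(1,|\alpha|)$ implies $-\log(1-V_j)\sim\textrm{Exp}(|\alpha|)$, the variable $Y=-\sum_{j=1}^H\log(1-V_j)$ is $\gam(H,|\alpha|)$-distributed, and bounding $\Pr(Y\le\log(1/\epsilon))$ by integrating the density on $[0,\log(1/\epsilon)]$ and applying $\Gamma(H+1)\ge(H/e)^H$ produces exactly $\{(e|\alpha|/H)\log(1/\epsilon)\}^H$.

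The hardest step is the tail-weight calculation: one must recognize the exponential distribution hidden in the stick-breaking variables and then use the Stirling-type lower bound on $\Gamma(H+1)$ to extract a prefactor sharp enough in $H$ to later drive $H$ with the sample size. The entropy discretization is routine but requires anchoring the location grid to $\sigmalo$ rather than the varying $\sigma$ so that a single net serves uniformly over the allowed scale range; the geometric ladder for $\sigma$ is what makes the final entropy term scale as $\log M$ rather than $M$.
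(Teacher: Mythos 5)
Your proposal is correct and follows essentially the same route as the paper: a four-way discretization of $(\pi,z,\sigma)$ together with a triangle-inequality decomposition of $\|p_{F,\sigma}-p_{F^*,\sigma^*}\|_1$ for the entropy bound, and the same three-event decomposition (an atom location escaping $[-a,a]^d$, $\sigma$ leaving the geometric band, excessive tail mass $\sum_{h>H}\pi_h$) with the tail-mass event reduced, exactly as you describe, to the lower tail of a $\gam(H,|\alpha|)$ variable and Stirling's bound on $\Gamma(H+1)$. The one cosmetic difference is that the paper first renormalizes $(\pi_1,\dots,\pi_H)$ onto the $H$-simplex and covers the simplex with an $\epsilon$-net directly, which avoids the mild $H\log H$ overhead your per-coordinate rounding to multiples of $\epsilon/H$ would introduce (harmless in the applications, but worth noting since the theorem's entropy bound is stated as $H\log(1/\epsilon)$).
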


\begin{proof}
Let $R^*$ be a $(\sigmalo\epsilon)$-net of $[-a,a]^d$ and let $S^*$ be an $\epsilon$-net of  the $H$-simplex $\S_H = \{p = (p_1,  \cdots, p_H): p_h \ge 0, \sum_h p_h = 1\}$. It is well known that the size of $R^*$ is $\lesssim \{a/(\sigmalo\epsilon)\}^d$ and that of $S^*$ is $\lesssim (1 / \epsilon)^H$. For any $p_{F, \sigma} \in \scQ$, with $F = \sum_{h = 1}^\infty z_h \delta_{z_h}$, find $z^*_1, \cdots, z^*_H \in R^*$, $\pi^* = (\pi^*_1, \cdots, \pi^*_H) \in S^*$ and $m^* \in \{1, \cdots, M\}$ such that
\begin{align}
& \max_{1 \le h \le H}\|z_h - z^*_h\| < \sigmalo \epsilon,\\
& \sum_{h = 1}^H |\tilde \pi_h - \pi^*_h| < \epsilon,\mbox{ where }\tilde \pi_h = \frac{\pi_h }{ 1-\sum_{l > H} \pi_l}, 1 \le h \le H, \mbox{ and}\\
& \sigma^* = \sigmalo(1 + \epsilon)^{m^*}\mbox{ satisfies }1 < \sigma/\sigma^* < 1 + \epsilon.
\end{align}
Then, with $F^* = \sum_{h = 1}^H \pi^*_h \delta_{z^*_h}$, we have,
\begin{align*} \|p_{F, \sigma} - p_{F^*, \sigma^*}\|_1 & \le  \|p_{F, \sigma} - p_{F, \sigma^*}\|_1 + \|p_{F, \sigma^*} - p_{F^*, \sigma^*}\|_1\\
& \le \frac{\sigma - \sigma}{\sigma^*} + \sum_{h > H} \pi_h + \sum_{h = 1}^H \pi_h \|\phi_{\sigma^*}(\cdot - z_h) - \phi_{\sigma^*}(\cdot - z^*_h)\|_1 + \sum_{h = 1}^H |\pi_h - \pi^*_h|.
\end{align*}
Each of the first three terms above is smaller than or equal to $\epsilon$. The last term is smaller than or equal to $(1 - \sum_{h > H}\pi_h)\sum_{h = 1}^H|\tilde \pi_h - \pi^*_h| + \sum_{h > H}\pi_h\sum_{h = 1}^H \pi^*_h \le 2\epsilon$
Thus a $5\epsilon$-net of $\scQ$, in the $L_1$ topology, can be constructed with $p^* = p_{F^*, \sigma^*}$ as above. The total number of such $p^*$ is $\lesssim (\frac{a}{\sigmalo\epsilon})^{dH}(\frac{1}{\epsilon})^HM$. This proves the first assertion of the theorem with $\rho = \|\cdot\|_1$; the constant multiplication by 5 can be absorbed in $\lesssim$ form of the bound. The same obtains for $\rho =$ the Hellinger metric because it is bounded by the square-root of the $L_1$ metric. 

Now with $\Pi$ denoting the DP mixture prior of Section \ref{model}, we have a stick-breaking representation of a random $p\sim \Pi$ given by $p = p_{F, \sigma} = \sum_{h = 1}^\infty \pi_h \phi_\sigma(\cdot - Z_h)$ with $\pi_h$ and $Z_h$ as described in \eqref{dp2} and the  paragraph that follows, and $\sigma^{-d} \sim \gam(a, b)$. Therefore,
\begin{equation} \Pi(\scQ^c) \le H \bar\alpha([-a,a]^d) + \Pr(\sigma^2 \not\in (\sigmalo^2, \sigmalo^2(1 + \epsilon)^{2M})) + \Pr\left(\sum_{h > H}\pi_h > \epsilon\right). 
\label{pc1}
\end{equation}
The first term is $\lesssim H\exp(-b_1a^2)$, by assumption on $\alpha$. The second term equals  $\Pr(\sigma^{-d} \ge \sigmalo^{-d}) + \Pr(\sigma^{-d} \le \sigmalo^{-d}(1 + \epsilon)^{-Md}) \lesssim \exp(-1/b_2 \sigmalo^{-d}) + (\sigmalo^d(1 + \epsilon)^{Md})^{-b_3}$ because $\sigma^{-d} \sim \gam(a, b)$. To bound the last term in \eqref{pc1}, note that $W = -\sum_{h = 1}^H\log(1 - V_h) \sim \textit{Ga}(H, |\alpha|)$, and therefore the last term equals
$$\Pr(W < \log(1 / \epsilon)) \le (|\alpha|\log \frac{1}{\epsilon})^H / \Gamma(H + 1) \le \bigg(\frac{e|\alpha|}{H}\log\frac{1}{\epsilon}\bigg)^H$$
by Stirling's formula. This proves the second assertion. 
\end{proof}

\subsection{Sieves for Theorem \ref{post rate}}
The subset $\scQ$ of Theorem \ref{basic sieve} can be easily adapted to form sieves targeted for different rates of convergence. Below we show this for the nearly parametric, super-smooth rate and also for the slower rates associated with H\"older classes of finitely differentiable functions. All this is done for any arbitrary dimension $d \ge 1$.

\begin{proposition}[Super-smooth rate]
\label{smooth sieve}
Fix any $s > 0$. For $\tilde \epsilon_n = n^{-1/2}(\log n)^{(d + 1)/2}$ and $\bar \epsilon_n = \tilde\epsilon_n (\log n)^{s/2}$,  there is a sequence of sets $\scP_n$ such that $\log N(\bar \epsilon_n, \scP_n, \rho) \lesssim n\bar\epsilon_n^2$ and $\Pi(\scP_n^c) \lesssim \exp(-cn\tilde\epsilon_n^2)$ for every $c > 0$, where $\rho$ is either the $L_1$ or the Hellinger metric. 
\end{proposition}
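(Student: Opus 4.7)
The plan is to instantiate the sieve $\scQ$ of Theorem \ref{basic sieve} at scale $\epsilon = \bar\epsilon_n$, choosing the four tuning parameters $(a_n, \sigmalo_n, H_n, M_n)$ as functions of $n$ and of $c$ so that each summand in the complement-mass bound falls below $\exp(-c n\tilde\epsilon_n^2)$, while the resulting $\bar\epsilon_n$-entropy stays within $n\bar\epsilon_n^2$. The two targets read $n\tilde\epsilon_n^2 = (\log n)^{d+1}$ and $n\bar\epsilon_n^2 = (\log n)^{d+1+s}$, so there is a factor of $(\log n)^s$ of slack between the entropy budget and the negative-log mass budget; this slack is what one spends on $H_n$ below.

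Three of the four parameters are essentially forced by their respective mass terms. I take $a_n \asymp (\log n)^{(d+1)/2}$ with a leading constant large in $c$, so that $b_1 a_n^2$ exceeds $c n\tilde\epsilon_n^2$ by enough to absorb the prefactor $H_n$ in $H_n e^{-b_1 a_n^2}$. I take $\sigmalo_n \asymp (\log n)^{-(d+1)/d}$ with a small leading constant so that $b_2 \sigmalo_n^{-d} \ge c n\tilde\epsilon_n^2$. For $M_n$, using $\log(1+\bar\epsilon_n) \asymp \bar\epsilon_n$, I take $M_n \asymp n^{1/2}(\log n)^{(d+1-s)/2}$ with a large constant so that $b_3 d M_n \log(1+\bar\epsilon_n) \ge c n\tilde\epsilon_n^2$, which absorbs the polylogarithmic $\sigmalo_n^{-b_3 d}$ prefactor. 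Of these three, only $M_n$ enters the entropy bound, and only through $\log M_n \asymp \log n$, which is negligible against $(\log n)^{d+1+s}$.

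The delicate choice is $H_n$, which sits on both sides of the ledger. The stick-breaking tail term $\{(e|\alpha|/H_n)\log(1/\bar\epsilon_n)\}^{H_n}$ on the mass side, via Stirling, demands $H_n \log\{H_n/\log(1/\bar\epsilon_n)\} \gtrsim c n\tilde\epsilon_n^2$, while the location--simplex piece $dH_n \log\{a_n/(\sigmalo_n \bar\epsilon_n)\} + H_n \log(1/\bar\epsilon_n)$ of the entropy bound is of order $H_n \log n$ and must stay within $(\log n)^{d+1+s}$. Since $\log(1/\bar\epsilon_n) \asymp \log n$, a natural choice is $H_n \asymp (\log n)^{d+1}/\log\log n$ with a $c$-dependent constant; this makes $H_n \log H_n$ of order $(\log n)^{d+1}$ on the mass side, and the resulting entropy term is to be accommodated by the $(\log n)^s$ slack together with the $\log\log n$ refinement from Stirling.

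The main obstacle is precisely this balancing at $H_n$: tuning the number of stick-breaking atoms to make the gamma-tail probability super-exponentially small without inflating the log-covering number of the simplex and location components beyond $n\bar\epsilon_n^2$. Once the four parameters are fixed as above, the remainder is bookkeeping: substitute into the two bounds of Theorem \ref{basic sieve} and verify that each of the four mass summands is dominated by $\exp(-c n\tilde\epsilon_n^2)$ and that the three entropy summands together are $\lesssim n\bar\epsilon_n^2$.
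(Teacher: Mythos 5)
Your overall strategy is the same as the paper's: instantiate the sieve $\scQ$ of Theorem~\ref{basic sieve} at scale $\epsilon=\bar\epsilon_n$ and tune $(a,\sigmalo,H,M)$ so that the four mass summands are each $\lesssim \exp(-cn\tilde\epsilon_n^2)$ while the entropy stays within $n\bar\epsilon_n^2$. However, the one step you flag as delicate --- the choice of $H_n$ --- is also the one step you leave unverified, and as chosen it does not work over the full stated range $s>0$.

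Concretely, with $H_n\asymp(\log n)^{d+1}/\log\log n$, the location and simplex pieces of the entropy are of order $H_n\log\bigl(a_n/(\sigmalo_n\bar\epsilon_n)\bigr)+H_n\log(1/\bar\epsilon_n)\asymp H_n\log n\asymp(\log n)^{d+2}/\log\log n$, while the budget is $n\bar\epsilon_n^2=(\log n)^{d+1+s}$. This requires $(\log n)^{1-s}\lesssim\log\log n$, which holds only when $s\ge 1$. Your prose --- ``accommodated by the $(\log n)^s$ slack together with the $\log\log n$ refinement from Stirling'' --- asserts exactly the inequality that fails for $s\in(0,1)$: the $\log\log n$ you gain from Stirling cannot compensate a polynomial-in-$\log n$ overshoot. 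The calculation needs to be written out, and once written out it shows the chosen $H_n$ is too small to meet the entropy budget in the regime $s<1$, which the proposition explicitly includes.

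The paper avoids this pitfall by working from the entropy budget backwards: it sets $H=n\bar\epsilon_n^2/\log n=(\log n)^{d+s}$, which by construction saturates $\log N\lesssim n\bar\epsilon_n^2$ (each of $dH\log(a/(\sigmalo\epsilon))$ and $H\log(1/\epsilon)$ is $\asymp(\log n)^{d+s+1}$ with the paper's remaining parameters), and only then checks the stick-breaking tail term. The paper also takes the much blunter $a^2=\sigmalo^{-d}=M=n$, which makes the first three mass terms $\lesssim e^{-b n}$ and kills them trivially while contributing only an additive $O(\log n)$ to the entropy; your fine-tuning of $a_n,\sigmalo_n,M_n$ to polylogarithmic size is not wrong, but it buys nothing, makes $\scP_n$ depend on $c$ (which then leaks a $c$-dependence into the entropy constant), and adds bookkeeping at the boundary cases (e.g.\ $M_n$ for $s>d+1$). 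The fix is to adopt the paper's $H$ and the crude $a,\sigmalo,M$, and then do the entropy arithmetic explicitly rather than by appeal to slack.
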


\begin{proof}
Let $\scP_n$ be defined as $\scQ$ of \eqref{eff} with $\epsilon = \bar\epsilon_n = n^{-1/2}(\log n)^{(d + 1 + s)/2}$, $H = n\bar\epsilon_n^2 / \log n = (\log n)^{d + s}$, and $M = a^2 = \sigmalo^{-d} = n$. Then, by Theorem \ref{basic sieve},
\begin{align*}
\log N(\bar \epsilon_n, \scP_n, \rho) & \lesssim d (\log n)^{d + s + 1} + (\log n)^{d + s + 1} + \log n\\
& \lesssim (\log n)^{d + s + 1} = n\bar \epsilon_n^2
\end{align*}
which proves the first assertion. Also,
\begin{equation}
\label{step1}
\Pi(\scP_n^c)  \lesssim (\log n)^{d + s}e^{-b_1n} + e^{-b_2 n} + n^{b_3}e^{-b_3 dn\log(1 + \bar \epsilon_n)} + (\log n)^{-(d + s - 1)(\log n)^{d + s}}.
\end{equation}
For any $c > 0$, the first, second and fourth terms on the right hand side of \eqref{step1} are clearly bounded by $C\exp(-c(\log n)^{d + s})$ for some constant $C$. The third term, too, is bounded by the same, possibly with different $C$ because $n\log(1 + \bar\epsilon_n) \gtrsim n\bar\epsilon_n^2 = (\log n)^s(\log n)^{d + 1} > c(\log n)^{d + s}$. And therefore $\Pi(\scP_n^c) \lesssim \exp(-c(\log n)^{d + 1})$. This proves the second assertion of the theorem.
\end{proof}

\begin{proposition}[H\"older-smooth sieve]
\label{ord sieve}
Fix any $\beta \in (0, 1/2)$, $q \ge 0$ and $s > 0$. For $\tilde \epsilon_n = n^{-\beta}(\log n)^{q}$, $\bar \epsilon_n = \epsilon_n (\log n)^s$,  there is a sequence of sets $\scP_n$ such that $\log N(\epsilon_n, \scP_n, \rho) \lesssim n\bar \epsilon_n^2$ and $\Pi(\scP_n^c) \lesssim \exp(-cn\tilde\epsilon_n^2)$ for every $c > 0$, where $\rho$ is either the $L_1$ or the Hellinger metric. 
\end{proposition}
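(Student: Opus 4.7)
The plan is to follow the template of Proposition 1, instantiating the basic sieve of Theorem 3 with parameters tuned to the slower polynomial rate $\tilde\epsilon_n = n^{-\beta}(\log n)^q$, and then verifying the entropy and prior-mass conditions term by term.

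Concretely, I will take $\scP_n$ to be the set $\scQ$ of Theorem 3 with $\epsilon = \bar\epsilon_n$, $H = \lceil n\bar\epsilon_n^2/\log n\rceil$, and $a^2 = \sigmalo^{-d} = M = n$. Since $\beta < 1/2$ and $s > 0$, $H \to \infty$, so the construction is well defined for large $n$. With these choices $\log\bigl(a/(\sigmalo\bar\epsilon_n)\bigr) \asymp \log n$, and the first assertion of Theorem 3 gives
\[
\log N(\bar\epsilon_n, \scP_n, \rho) \lesssim dH\log n + H\log n + \log n \asymp n\bar\epsilon_n^2,
\]
which is the required entropy bound.

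For the prior mass I will plug the same choices into the second assertion of Theorem 3 and check that each of the four terms is $o(\exp(-cn\tilde\epsilon_n^2))$ for every $c > 0$. The first two terms, $H\exp(-b_1 a^2)$ and $\exp(-b_2\sigmalo^{-d})$, are of order $\exp(-c' n)$, which dominates any $\exp(-cn\tilde\epsilon_n^2)$ because $n\tilde\epsilon_n^2 = n^{1-2\beta}(\log n)^{2q} = o(n)$. For the third term I use $\log(1+\bar\epsilon_n) \gtrsim \bar\epsilon_n/2$ to get decay of order $\exp(-c'' n\bar\epsilon_n)$; since $n\bar\epsilon_n = n^{1-\beta}(\log n)^{q+s}$ exceeds $n\tilde\epsilon_n^2$ by the factor $n^\beta(\log n)^{s-q} \to \infty$, this also dominates. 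For the fourth term the base $(e|\alpha|/H)\log(1/\bar\epsilon_n)$ is of order $n^{2\beta-1}(\log n)^{2-2q-2s}$ and tends to zero because $\beta < 1/2$; hence it is bounded by $\exp(-\alpha H)$ for every $\alpha > 0$, and $\alpha H \asymp n\bar\epsilon_n^2$ exceeds $cn\tilde\epsilon_n^2$ by the factor $(\log n)^{2s}$.

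There is no genuine obstacle beyond bookkeeping. The extra $(\log n)^s$ slack between $\bar\epsilon_n$ and $\tilde\epsilon_n$ is what allows the third and fourth prior-mass terms to beat $\exp(-cn\tilde\epsilon_n^2)$ for arbitrary $c$, while the first two terms' exponential-in-$n$ decay beats the polynomial-in-$n$ target trivially. The hypothesis $\beta < 1/2$ is used both to ensure $H \to \infty$ (so Stirling's bound applies and the $H$-th power becomes small) and to drive the base of the fourth term to zero.
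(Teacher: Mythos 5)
Your construction and parameter choices match the paper exactly, and the entropy bound and the first three prior-mass terms are handled correctly. There is a bookkeeping slip in the fourth term, though. You write that since the base tends to zero, the term is bounded by $\exp(-\alpha H)$ for every $\alpha>0$, and that $\alpha H \asymp n\bar\epsilon_n^2$. In fact $H = n\bar\epsilon_n^2/\log n$, so $\alpha H \asymp n\bar\epsilon_n^2/\log n = n^{1-2\beta}(\log n)^{2(q+s)-1}$, whereas $cn\tilde\epsilon_n^2 = cn^{1-2\beta}(\log n)^{2q}$. The ratio is $\asymp (\log n)^{2s-1}$, which does \emph{not} diverge when $s\le 1/2$, so the chain of inequalities as you have written it does not yield the claim for all $s>0$. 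The fix is to use not just that the base tends to zero but that it decays polynomially in $n$: since the base is $\asymp n^{-(1-2\beta)}$ up to polylog factors, $\log(\text{base}) \asymp -(1-2\beta)\log n$, and hence the fourth term is $\exp\{H\log(\text{base})\} \lesssim \exp\{-(1-2\beta-\delta)H\log n\}$ for small $\delta>0$, and $H\log n = n\bar\epsilon_n^2$ exceeds $cn\tilde\epsilon_n^2$ by the factor $(\log n)^{2s}\to\infty$. This is exactly the $\log n$ you dropped, and it is what the paper's displayed bound $n^{-(1-2\beta)H}$ encodes. With this repair your argument is the paper's argument.
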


\begin{proof}
Let $\scP_n$ be defined as on the right hand side of \eqref{eff} with $\epsilon = \bar \epsilon_n  = n^{-\beta}(\log n)^{q + s}$, $H = n\bar\epsilon_n^2 / \log n = n^{1 - 2\beta}(\log n)^{2(q + s) - 1}$, $M = a^2 = \sigmalo^{-d} = n$. Then by Theorem \ref{basic sieve}, $\log N(\epsilon_n, \scP_n, \rho) \lesssim n^{1 - 2\beta} (\log n)^{2(q + s)}$ and for every $c > 0$,
\begin{align*}
\label{step1b}
\Pi(\scP_n^c) & \lesssim n^{1 - 2\beta}(\log n)^{2(q + s) - 1}e^{-b_1n} + e^{-b_2 n} + n^{b_3}e^{-b_3 dn\log(1 + \bar\epsilon_n)} + n^{-(1 - 2\beta)n^{1 - 2\beta}(\log n)^{2(q + s)-1}}\\
& \lesssim e^{-(1 - 2\beta)n^{1 - 2\beta}(\log n)^{2(q + s)}} \lesssim e^{-cn^{1 - 2\beta}(\log n)^{2q}}.
\end{align*}

\end{proof}

The ordinary-smooth rate corresponds to $\beta = 2 / (4 + d)$, and more generally, a H\"older class of functions with continuous derivatives up to order $k$ corresponds to $\beta = k / (2k +d)$.

%
%

\section{Prior Thickness}
\label{s conc}

With sieve conditions (\ref{gv1}), (\ref{gv2}) taken care of, a proof of Theorem \ref{post rate} requires establishing the prior thickness property (\ref{gv3}) of $\Pi$ for each of the two classes of densities. Below we show that for a $p_0$ from either class, $\Pi(B(A\tilde \epsilon_n; p_0)) \gtrsim e^{-cn\tilde\epsilon_n^2}$ for some constants $A > 0, c > 0$ ,with $\tilde \epsilon_n$ as in Proposition \ref{smooth sieve} or Proposition \ref{ord sieve} as appropriate (with $\beta = 2 / (4 + d)$). This immediately leads to $\Pi(B(\tilde \epsilon_n; p_0)) \gtrsim e^{-c_2 n\tilde \epsilon_n^2}$ for some finite number $c_2 > 0$ and completes a proof of Theorem \ref{post rate}, with $\epsilon_n = \bar \epsilon_n$, because Propositions \ref{smooth sieve} and \ref{ord sieve} hold for all constants $c > 0$, including, $c = c_2 + 4$, as needed by Theorem \ref{suff}.

We will first tackle prior thickness at ordinary-smooth densities $p_0$ which present a bigger challenge than the super-smooth ones. Our proof closely follows the calculations presented in \citet{ghosal&vandervaart07} with some minor adaptation needed to handle higher dimensions. For this reason, most of the results are presented in the Appendix, with proofs given only for those where some adaptation is needed. However, we present the main argument below, because a similar argument presented in \citet[Section 9]{ghosal&vandervaart07} leaves some gaps (pun intended).

\begin{proposition}[Ordinary-smooth thickness]
\label{ordinarysmooth}
Suppose $p_0$ is compactly supported and
$$\int (\|\nabla p_0\| / p_0)^4 p_0d\lambda  < \infty,\;\; \int (\|\nabla^2 p_0\|_2 / p_0)^2 p_0 d\lambda < \infty,$$ where $\|A\|_2$ denotes the spectral norm of a matrix $A$. Then $
\Pi(B(A\tilde \varepsilon_n; p_0)) \gtrsim e^{-cn\tilde\varepsilon_n^2}$ with $\tilde \epsilon_n = n^{-2/(4 + d)}(\log n)^{(4d + 2)/(d + 4)}$ for some constants $A > 0, c > 0$.
\end{proposition}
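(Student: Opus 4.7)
The plan is to follow the standard $d$-variate adaptation of the prior-thickness argument of \citet{ghosal&vandervaart07}. The first step is to pick an oracle bandwidth $\sigma_n$ with $\sigma_n^2 \asymp \tilde\epsilon_n$ (so $\sigma_n \asymp n^{-1/(4+d)}$ up to logarithmic factors); this is the usual kernel bandwidth for twice-differentiable densities in dimension $d$. Because $p_0$ has compact support and the two integrability conditions on $\|\nabla p_0\|/p_0$ and $\|\nabla^2 p_0\|_2/p_0$ hold, a Taylor expansion of $p_0 \ast \phi_{\sigma_n}$ gives $K(p_0, p_{F_0, \sigma_n}) \lesssim \sigma_n^4$ and $V(p_0, p_{F_0, \sigma_n}) \lesssim \sigma_n^4$, where $F_0$ denotes the measure with Lebesgue density $p_0$. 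This is the natural $d$-variate analogue of Lemma 3 of \citet{ghosal&vandervaart07}; its proof is where the moment conditions on the log-derivatives of $p_0$ are consumed, and it carries most of the technical burden.

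Next I discretize $F_0$ to produce a finitely supported mixing measure $F^\star$. I partition $\supp(p_0) \subset [-T,T]^d$ into cubes at a scale $\eta_n$ (of order $\sigma_n$ times a suitable power of $\tilde\epsilon_n$), pick a representative $z_j^\star$ in each cube, and assign it mass $\pi_j^\star = F_0(\mathrm{cube}_j)$, giving $N_n \lesssim \eta_n^{-d}$ atoms. A Lipschitz bound on $z \mapsto \phi_{\sigma_n}(\,\cdot\, - z)$ combined with a lower bound on $p_{F^\star,\sigma_n}$ in the relevant region converts this $L_1$-closeness into a KL/V bound, yielding $K(p_0, p_{F^\star,\sigma_n}) \lesssim \tilde\epsilon_n^2$ and likewise for $V$.

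It then suffices to lower-bound the prior mass placed near the pair $(F^\star, \sigma_n)$. Using the stick-breaking representation of $F \sim \dirp(\alpha)$, I reduce this to three independent events: (a) each of the first $N_n$ atom locations $Z_1, \ldots, Z_{N_n}$ falls in the $\eta_n$-cube around the corresponding $z_j^\star$, which has $\bar\alpha$-mass $\gtrsim \eta_n^d$; (b) the first $N_n$ stick-breaking weights $(\pi_1, \ldots, \pi_{N_n})$ lie within $\tilde\epsilon_n$ of $(\pi_1^\star, \ldots, \pi_{N_n}^\star)$ on the simplex, which is a standard Dirichlet small-ball estimate; and (c) the residual $\sum_{h > N_n} \pi_h \le C \tilde\epsilon_n$, a simple tail bound from the stick-breaking construction. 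Combined, (a)--(c) give a lower bound $\exp\{-C N_n \log(1/\tilde\epsilon_n)\}$; the gamma prior on $\sigma^{-d}$ contributes an additional factor polynomial in $\sigma_n$, which is negligible on this scale.

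Finally, the logarithmic factor $(\log n)^{(4d+2)/(d+4)}$ in $\tilde\epsilon_n$ is tuned so that $N_n \log(1/\tilde\epsilon_n) \lesssim n \tilde\epsilon_n^2$: indeed $N_n \asymp \sigma_n^{-d}$ up to polylog gives $N_n \asymp n^{d/(4+d)}$ up to polylog, whereas $n \tilde\epsilon_n^2 \asymp n^{d/(4+d)}(\log n)^{2(4d+2)/(d+4)}$. The main obstacle will be the KL/V bound $K(p_0, p_{F_0, \sigma_n}) \lesssim \sigma_n^4$; the spectral-norm condition on $\nabla^2 p_0/p_0$ enters here in a non-trivial way to control $\log(p_0/p_{F_0,\sigma_n})$ near the boundary of $\supp(p_0)$, which is precisely the step the authors defer to the Appendix.
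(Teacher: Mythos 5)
Your high-level roadmap is the right one, but there is a concrete gap in the discretization step that prevents the argument from closing, and it is precisely the step the paper has to work to get right.

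You discretize $F_0$ by partitioning $\supp(p_0)$ into cubes at scale $\eta_n$ of order $\sigma_n$ times a power of $\tilde\epsilon_n$, keeping one representative per cube, which gives $N_n \asymp \eta_n^{-d}$ atoms. You then assert that $N_n \asymp \sigma_n^{-d}$ up to polylogarithmic factors, so that the final prior-mass lower bound $\exp\{-C N_n \log(1/\tilde\epsilon_n)\}$ is of the required size $\exp\{-c\, n\tilde\epsilon_n^2\}$. These two statements are incompatible. To make the cube-partition approximation accurate enough for the KL/V bound, a Lipschitz estimate forces $\eta_n \lesssim \sigma_n \tilde\epsilon_n^{c'}$ for some $c' > 0$ (otherwise the $L_1$ error $\eta_n/\sigma_n$ is not $o(\tilde\epsilon_n^2/\log(1/\tilde\epsilon_n))$). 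This gives $N_n \gtrsim \sigma_n^{-d}\tilde\epsilon_n^{-c'd}$, which exceeds $\sigma_n^{-d}$ by a \emph{polynomial} factor in $n$, so $N_n \log(1/\tilde\epsilon_n)$ blows past $n\tilde\epsilon_n^2 \asymp n^{d/(4+d)}\cdot(\log n)^{\mathrm{const}}$ and the lower bound collapses. The ingredient you need, and which the paper supplies as Theorem \ref{lem disc approx} and Corollary \ref{cor disc approx} (the $d$-dimensional extension of Lemma~2 of \citet{ghosal&vandervaart07} and Lemma~3.1 of \citet{ghosal&vandervaart01}), is a moment-matching discretization: locally, on each cube of side $\asymp \sigma$, one matches $F_0$ by a discrete measure with a \emph{bounded} number of atoms (roughly $\{\log(1/\tilde\epsilon_n)\}^d$, via matching moments up to a logarithmic order), for a total of $N \lesssim \sigma^{-d}\{\log(1/\tilde\epsilon_n)\}^d$ atoms while still achieving $\|p_{P_0,\sigma}-p_{F_\sigma,\sigma}\|_\infty \lesssim \tilde\epsilon_n^{2b}/\sigma^{d}$ and $\|p_{P_0,\sigma}-p_{F_\sigma,\sigma}\|_1\lesssim \tilde\epsilon_n^{2b}\{\log(1/\tilde\epsilon_n)\}^{1/2}$. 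Only then does $N\log(1/\tilde\epsilon_n)\lesssim n\tilde\epsilon_n^2$ hold for $q \ge (4d+2)/(d+4)$.

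Two smaller remarks. First, for the Dirichlet small-ball probability the paper uses the finite-dimensional Dirichlet distribution over a fixed partition of $\real^d$ (Lemma \ref{lem dir}, which is Lemma~10 of \citet{ghosal&vandervaart07}) rather than the stick-breaking representation you propose; your stick-breaking calculation is a legitimate alternative route to a bound of the same order, but it is a departure from the argument in the paper, which reserves stick-breaking for the sieve construction. Second, you attribute the role of the $\|\nabla^2 p_0\|_2/p_0$ moment condition to controlling $\log(p_0/p_{F_0,\sigma_n})$ near the boundary of $\supp(p_0)$; in fact that condition is consumed inside the Cauchy--Schwarz step of Lemma \ref{lem p0} that proves $h(p_0,p_{P_0,\sigma})\lesssim\sigma^2$. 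The $L_\infty$ lower bound on $p_{F,\sigma}$ that controls $\log\|p_0/p_{F,\sigma}\|_\infty$ is handled separately, by requiring $\min_j F(U_j) \ge \tilde\epsilon_n^{4db}/2$ over a partition whose cells inside $[-a,a]^d$ have diameter at most $\sigma$, so that every $x$ in the support sees at least one cell of appreciable $F$-mass within a $\sigma$-ball.
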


\begin{proof}
Fix a $\sigma^2 \in \tilde\epsilon_n \{\log (1/\tilde\epsilon_n)\}^{-2} \cdot (1/2, 1)$. Find a $b > 1$ such that $\tilde\epsilon_n^b \{\log(1/\tilde\epsilon)\}^{9/4} \le \tilde\epsilon_n$. Let $P_0$ denote the probability measure associated with the density $p_0$. By Corollary \ref{cor disc approx}, there is a discrete probability measure $F_\sigma = \sum_{j = 1}^{N} p_j \delta_{z_j}$ with at most $N \lesssim \sigma^{-d} \log( 1 / \tilde\epsilon_n)^d$ support points in $[-a,a]^d$, with at least $\sigma \tilde\epsilon_n^{2b}$ separation between any $z_i \ne z_j$, such that 
$$\|p_{P_0, \sigma} - p_{F_\sigma,\sigma}\|_\infty \lesssim \varepsilon_n^{2b} / \sigma^{d + 1}~\mbox{and}~\|p_{P_0, \sigma} - p_{F_\sigma,\sigma}\|_1 \lesssim \varepsilon_n^{2b}\{\log(1/\varepsilon_n)\}^{1/2}.$$ Place disjoint balls $U_j$ with centers at $z_j$, $j = 1, \cdots, N$ with diameter $\sigma\varepsilon_n^{2b}$ each. Extend $\{U_1, \cdots, U_N\}$ to a partition $\{U_1, \cdots, U_K\}$ of $[-a, a]^d$ such that each $U_j$, $j = N + 1, \cdots, K$, has diameter smaller than or equal to $\sigma$. This can be done with $K \lesssim \sigma^{-d}\{\log(1/\tilde\epsilon_n)\}^d$. Further extend this to a partition $U_1, \cdots, U_M$ of $\real^d$ such that $(\sigma\tilde\epsilon_n^{2b})^d \lesssim \alpha(U_j) \le 1$ for all $j = 1, \cdots, M$. We can still have $M \lesssim \sigma^{-d}\{ \log(1 / \tilde\epsilon_n)\}^d \lesssim \tilde\epsilon_n^{-d/2}\{\log(1/\tilde\epsilon_n)\}^{2d}$. Define $p_j = 0$, $j = N + 1, \cdots, M$.  

Let $\scP_\sigma$ denote the set of probability measures $F$ on $\real^d$ with $\sum_{j = 1}^M |F(U_j) - p_j| \le 2\tilde\epsilon_n^{2db}$ and $\min_{1 \le j \le M} F(U_j) \ge \tilde\epsilon_n^{4db} / 2$. Then, by Lemma \ref{lem diff} (with $V_i = U_i$, $i = 1,\cdots, N$, $V_0 = \cup_{j > N} V_j$) for any $F \in \scP_\sigma$, $\|p_{F_\sigma,\sigma} - p_{F, \sigma}\|_\infty \lesssim \tilde\epsilon_n^{2b}/\sigma^d$,
$\|p_{F_\sigma,\sigma} - p_{F, \sigma}\|_1 \lesssim \tilde\epsilon_n^{2b}$ and hence, by Lemma \ref{lem dist} and Lemma \ref{lem p0}, 
\begin{align*}
h(p_0, p_{F, \sigma}) & \le h(p_0, p_{P_0, \sigma}) + h(p_{P_0, \sigma}, p_{F_\sigma, \sigma}) + h(p_{F_\sigma, \sigma}, p_{F, \sigma})\\
& \lesssim \sigma^2 + \tilde\epsilon_n^b \{\log(1/\tilde\epsilon_n)\}^{1/4} + \tilde\epsilon_n^b\\
& \lesssim \sigma^2 + \tilde\epsilon_n^b \{\log(1/\tilde\epsilon_n)\}^{1/4}
\end{align*}
Also, for any such $F$, for every $x \in [-a,a]^d$ with $J(x)$ denoting the $j \in \{1, \cdots, K\}$ such that $x \in U_j$,
$$p_{F,\sigma}(x) \ge \int_{\|z - x\| \le \sigma} \phi_\sigma(x - z) dF(z) \gtrsim \frac{1}{\sigma^d} \int_{\|x - z\| \le \sigma} dF(z) \ge \frac{1}{\sigma^d}F(U_{J(x)}) \gtrsim \frac{\tilde\epsilon_n^{4db}}{\sigma^d}$$
because, $U_{J(x)}$, with diameter no larger than $\sigma$, must be a subset of the ball of radius $\sigma$ around $x$. So $F \in \scP_\sigma$ implies $\log \|p_0 / p_{F, \sigma}\|_\infty \lesssim \log (1/\tilde\epsilon_n)$ and therefore, by Lemma \ref{lem dist}, $K(p_0, p_{F, \sigma}) \le A^2 \tilde\epsilon_n^2$ and $V(p_0 , p_{F, \sigma}) \le A^2 \tilde\epsilon_n^2$, for a universal constant $A > 0$ that does not depend on $\sigma$.

Note that $M\tilde\epsilon_n^{2db} \lesssim \tilde\epsilon_n^{2db - d/2}\{\log(1/\tilde\epsilon_n)\}^{2d} \le 1$ and for some large constant $a_1 > 0$, $\tilde\epsilon_n^{2db} \lesssim a_1 \{\min_{1 \le j \le M} \alpha(U_j)\}^{2/3}$. So, by Lemma \ref{lem dir}, $\Pr(F \in \scP_\sigma) \ge C \exp(-c M \log 1 / \tilde\epsilon_n) \gtrsim C\exp(-c\tilde\epsilon_n^{-d/2}\{\log(1/\tilde\epsilon_n)\}^{2d + 1})$, for some constants $C, c$ that depend on $\alpha(\real^d), a, d$ and $b$. Therefore, 
\begin{align*}
\Pi(B(A\tilde\epsilon_n; p_0)) & \gtrsim \exp(-c\tilde\epsilon_n^{-d/2}\{\log(1/\tilde\epsilon_n)\}^{2d + 1})\Pr(\sigma^2 \in \tilde\epsilon_n \{\log(1/\tilde\epsilon_n)\}^{-2}\cdot (1/2,1))\\
& = \exp(-c\tilde\epsilon_n^{-d/2}\{\log(1/\tilde\epsilon_n)\}^{2d + 1})\Pr(\sigma^d \in \tilde\epsilon_n^{d/2} \{\log(1/\tilde\epsilon_n)\}^{-d}\cdot (1/2^d,1))\\
& \gtrsim \exp(-c\tilde\epsilon_n^{-d/2}\{\log(1/\tilde\epsilon_n)\}^{2d + 1}) 
\end{align*}
because $\sigma^{-d}$ has a gamma distribution.

From this the result follows if $\tilde\epsilon_n^{-d/2}\{\log(1 / \tilde\epsilon_n)\}^{2d + 1} \le n\tilde\epsilon_n^2$. With $\tilde\epsilon_n = n^{-2 / (4 + d)}(\log n)^{q}$, we get $n\tilde\epsilon_n^2 = n^{d/(4 + d)}(\log n)^{2q}$ and $\tilde\epsilon_n^{-d/2}\{\log (1/\tilde\epsilon_n)\}^{2d + 1} < n^{d/(4 + d)}(\log n)^{2d + 1 - dq/2}$ and hence the condition is satisfied if $2d + 1 - dq / 2 \le 2q$, i.e., if $q \ge (4d + 2) / (d + 4)$. 
\end{proof}

Prior thickness calculation at a super-smooth $p_0$ follows along the same line, but is simpler because we can bypass the first step in the proof of Proposition \ref{ordinarysmooth} of approximating $p_0$ by a $p_{F,\sigma}$. In fact, this approximation is the main driver of the slower thickness rate $\tilde\epsilon_n$, the recent developments in \citet{kruijer.10} are about refining this approximation for densities that have higher order derivatives. 

\begin{proposition}[Super-smooth thickness]
\label{supersmooth}
If $p_0 = p_{F_0, \sigma_0}$ for some $F_0$ supported on $[-a,a]^d$, then $\Pi(B(A\tilde \varepsilon_n; p_0)) \gtrsim e^{-cn\tilde\varepsilon_n^2}$ with $\tilde \epsilon_n = n^{-1/2}(\log n)^{(d + 1)/2}$ for some constants $A, c > 0$.
\end{proposition}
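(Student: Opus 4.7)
The plan is to follow the same skeleton as the proof of Proposition \ref{ordinarysmooth}, but to omit its most expensive step. Because $p_0 = p_{F_0, \sigma_0}$ is already a normal mixture with a fixed bandwidth, there is no need to approximate it first by a kernel-smoothed version $p_{P_0, \sigma}$ with vanishing $\sigma$. The bandwidth $\sigma$ can be kept of order $\sigma_0$ throughout, which is precisely what makes the nearly parametric rate $\tilde\epsilon_n = n^{-1/2}(\log n)^{(d+1)/2}$ reachable.

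First, I would restrict $\sigma$ to a short interval around $\sigma_0$, say $\sigma^{-d} \in \sigma_0^{-d} \cdot (1, 1 + c\tilde\epsilon_n)$. Since $\sigma^{-d} \sim \gam(a, b)$ has a density bounded away from zero in a neighbourhood of $\sigma_0^{-d}$, this event has prior probability $\gtrsim \tilde\epsilon_n$, which dominates the target bound $e^{-c n \tilde\epsilon_n^2}$. On this event, a standard Taylor argument yields $\|p_{F_0, \sigma_0} - p_{F_0, \sigma}\|_1 \lesssim \tilde\epsilon_n$ along with compatible pointwise estimates.

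Next, I would apply Corollary \ref{cor disc approx} directly to $F_0$ (in place of $P_0$ from the ordinary-smooth case) to produce a discrete $F_\sigma = \sum_{j = 1}^N p_j \delta_{z_j}$, supported on $[-a,a]^d$, with the $z_j$'s separated by at least $\sigma \tilde\epsilon_n^{2b}$ and $N \lesssim \{\log(1/\tilde\epsilon_n)\}^d$, satisfying the same sup-norm and $L_1$ bounds between $p_{F_0,\sigma}$ and $p_{F_\sigma,\sigma}$ as before. The decisive economy is that, because $\sigma$ stays bounded away from zero, $N$ is only polylogarithmic in $n$. I would then wrap disjoint $\sigma\tilde\epsilon_n^{2b}$-balls around the $z_j$'s into a partition $\{U_1, \ldots, U_M\}$ of $\real^d$ with $M \lesssim \{\log(1/\tilde\epsilon_n)\}^d$ and with each $\alpha(U_j)$ polynomially lower bounded. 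Defining the Dirichlet test set $\scP_\sigma$ exactly as in Proposition \ref{ordinarysmooth} and invoking Lemma \ref{lem dir} then gives $\Pr(F \in \scP_\sigma) \gtrsim \exp(-cM\log(1/\tilde\epsilon_n)) \gtrsim \exp(-c(\log n)^{d+1}) = \exp(-c n\tilde\epsilon_n^2)$. For such $F$, Lemma \ref{lem diff} together with the triangle inequality gives $h(p_0, p_{F,\sigma}) \lesssim \tilde\epsilon_n$, which Lemma \ref{lem dist} upgrades to $K(p_0, p_{F,\sigma}), V(p_0, p_{F,\sigma}) \lesssim \tilde\epsilon_n^2$.

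The one genuinely new wrinkle relative to the ordinary-smooth case is controlling $\log \|p_0/p_{F,\sigma}\|_\infty$: here $p_0$ is not compactly supported but has Gaussian tails. Inside a box centred at the origin of side $O(\sqrt{\log n}\,\sigma_0)$, the pointwise lower bound argument of Proposition \ref{ordinarysmooth} (via the partition construction) still delivers $p_{F,\sigma}(x) \gtrsim \tilde\epsilon_n^{4db}/\sigma^d$, so $\log(p_0/p_{F,\sigma}) \lesssim \log(1/\tilde\epsilon_n)$. Outside the box, both $p_0$ and $p_{F,\sigma}$ are dominated by a common Gaussian of variance $\asymp \sigma_0^2$, so the log-ratio grows at most linearly in $\|x\|^2$, while the $p_0$-mass of the tail region is $o(\tilde\epsilon_n^2)$; the tail contributions to $K$ and $V$ are therefore negligible. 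This tail control is where any real care is needed; once it is in place, the conclusion assembles exactly as in Proposition \ref{ordinarysmooth}.
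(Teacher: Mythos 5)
Your overall architecture matches the paper's: skip the $p_{P_0,\sigma}$-approximation step, keep $\sigma \asymp \sigma_0$, discretize $F_0$ with Corollary~\ref{cor disc approx} to get $N, M \lesssim \{\log(1/\tilde\epsilon_n)\}^d$, and invoke Lemma~\ref{lem dir} and Lemma~\ref{lem dist}. However, your choice of $\sigma$-neighbourhood is too coarse, and this creates a genuine gap when you try to land on $K,V \lesssim \tilde\epsilon_n^2$.

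You restrict $\sigma^{-d} \in \sigma_0^{-d}(1, 1 + c\tilde\epsilon_n)$, which gives $|1 - \sigma/\sigma_0| \lesssim \tilde\epsilon_n$ and hence, at best, $h(p_0, p_{F,\sigma}) \lesssim \tilde\epsilon_n$. But Lemma~\ref{lem dist} gives $K(p_0, p_{F,\sigma}) \lesssim h^2 (1 + \log\|p_0/p_{F,\sigma}\|_\infty)$ and $V(p_0, p_{F,\sigma}) \lesssim h^2 (1 + \log\|p_0/p_{F,\sigma}\|_\infty)^2$, and the pointwise lower bound $p_{F,\sigma}(x) \gtrsim \tilde\epsilon_n^{4db}/\sigma^d$ that comes out of the $\scP_\sigma$ construction forces $\log\|p_0/p_{F,\sigma}\|_\infty \asymp \log(1/\tilde\epsilon_n)$ --- it cannot be made $O(1)$. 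So your bound gives $K,V \lesssim \tilde\epsilon_n^2 \{\log(1/\tilde\epsilon_n)\}^2$, which is a $(\log n)^2$ factor off the target $\tilde\epsilon_n^2$. Your statement that Lemma~\ref{lem dist} ``upgrades'' $h \lesssim \tilde\epsilon_n$ to $K,V\lesssim \tilde\epsilon_n^2$ is therefore false as written. The paper avoids this by taking the narrower window $\sigma \in \sigma_0 (1 - \tilde\epsilon_n\{\log(1/\tilde\epsilon_n)\}^{-2}, 1)$, and by fixing $b > 1$ with $\tilde\epsilon_n^b\{\log(1/\tilde\epsilon_n)\}^{9/4} \le \tilde\epsilon_n$ so that the discretization error $\tilde\epsilon_n^{2b}\{\log(1/\tilde\epsilon_n)\}^{1/2}$ is also $\lesssim \tilde\epsilon_n\{\log(1/\tilde\epsilon_n)\}^{-2}$. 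These two slack factors are exactly what absorb the $(1 + \log\|p_0/p_{F,\sigma}\|_\infty)^2$ term, and they cost nothing on the prior-mass side because $\Pr(\sigma \in \sigma_0(1 - \tilde\epsilon_n\{\log(1/\tilde\epsilon_n)\}^{-2},1)) \gtrsim \tilde\epsilon_n\{\log(1/\tilde\epsilon_n)\}^{-2}$ still dwarfs $e^{-c(\log n)^{d+1}}$. Your proof needs this extra polylog shrinkage; without it, the constant $A$ in $\Pi(B(A\tilde\epsilon_n;p_0))$ becomes $n$-dependent. Your tail discussion in the last paragraph is a fair observation of a point the paper itself glosses over, but it does not repair the $\sigma$-interval issue, and as stated it is not compatible with Lemma~\ref{lem dist}, which requires the global sup-norm $\|p_0/p_{F,\sigma}\|_\infty$ to be finite rather than a box-restricted version.
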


\begin{proof}
Fix a $\sigma \in \sigma_0 \cdot (1 - \tilde\epsilon_n\{\log(1/\tilde\epsilon_n)\}^{-2}, 1)$. Fix $b > 1$ such that $\tilde\epsilon_n^b\{\log(1/\tilde\epsilon_n)\}^{9/4} \le \tilde\epsilon_n$. Construct $\scP_\sigma$ as before, but with $p_{F_0, \sigma}$ instead of $p_{P_0, \sigma}$. Because $\sigma$ is bounded from below by $\sigma_0/2$, this can be constructed with an $M \lesssim \{\log(1/\tilde\epsilon_n)\}^d$ and hence $\Pr(F \in \scP_\sigma) \gtrsim \exp(-c\{\log(1/\tilde\epsilon)\}^{d + 1})$ for some constant $c$. Note that 
\begin{align*}
\|p_0 - p_{F_\sigma, \sigma}\|_1 & \le \|p_0 - p_{F_0, \sigma}\|_1 + \|p_{F_0, \sigma} - p_{F_\sigma, \sigma}\|_1\le 1 - \sigma / \sigma_0 + \tilde\epsilon_n^{2b}\{\log(1/\tilde\epsilon_n)\}^{1/2}\\
& \le \tilde\epsilon_n \{\log(1 / \tilde\epsilon_n)\}^{-2}  + \tilde\epsilon_n^{2b}\{\log(1/\tilde\epsilon_n)\}^{1/2}
\end{align*}
and therefore, $F \in \scP_\sigma$ implies $K(p_0 , p_{F, \sigma}) \le A^2 \tilde\epsilon_n^2$ and $V(p_0, p_{F,\sigma})\le A^2 \tilde\epsilon_n^2$ for some universal constant $A > 0$ that does not depend on $\sigma$. Now, because $\Pr(\sigma \in \sigma_0(1 - \tilde\epsilon_n\{\log(1/\tilde\epsilon_n)\}^{-2}, 1)) \gtrsim \tilde\epsilon_n\{\log(1/\tilde\epsilon_n)\}^{-2} \gtrsim \exp(-\{\log(1/\tilde\epsilon_n)\}^{d + 1})$ we have $p_n \gtrsim \exp(-c\{\log(1/\tilde\epsilon_n)\}^{d + 1})$. 
From this the result follows if $\{\log(1/\tilde\epsilon_n)\}^{d + 1} \le n\tilde\epsilon_n^2$, which is satisfied with $\tilde\epsilon_n = n^{-1/2} (\log n)^q$ for $2q \ge d + 1$. 
\end{proof}

\appendix

\section{Appendix: Supporting results and proofs}

\begin{theorem}
\label{lem disc approx}
Let $P_0$ be a probability measure on $[-a,a]^d \subset \real^d$. For any $\varepsilon >0$ and $\sigma > 0$, there is a discrete probability measure $F_\sigma$ on $[-a,a]^d$ with at most $N_{\sigma, \varepsilon} = D[\{(a / \sigma) \vee 1\} \log(1/\varepsilon)]^d$ support points such that $\|p_{P_0, \sigma} - p_{F_\sigma, \sigma}\|_\infty \lesssim \varepsilon /\sigma^d$ and $\|p_{P_0, \sigma} - p_{F_\sigma, \sigma}\|_1 \lesssim \varepsilon \{\log (1/ \varepsilon)\}^{1/2}$, for some universal constant $D$.
\end{theorem}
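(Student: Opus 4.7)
My plan is to combine a cube decomposition of the support with a Tchakaloff-type moment-matching step inside each cube, and then to control the resulting kernel-mixture error through a Taylor expansion of the Gaussian kernel.

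First I would partition $[-a,a]^d$ into cubes $C_1,\dots,C_K$ of side at most $\sigma$ (taking the entire box if $\sigma > a$), so that $K \lesssim \{(a/\sigma)\vee 1\}^d$ and every cube has diameter at most $\sqrt{d}\,\sigma$. Writing $w_j = P_0(C_j)$ and letting $P_{0,j}$ denote the normalised restriction of $P_0$ to $C_j$, the Richter--Tchakaloff theorem supplies, for each $j$, a discrete probability measure $G_j$ with at most $\binom{k+d}{d}$ atoms inside $C_j$ whose polynomial moments through total degree $k$ agree with those of $P_{0,j}$. I would then choose $k = \lceil C_1\log(1/\varepsilon)\rceil$ (with the universal constant $C_1$ fixed in the error step below) and set $F_\sigma = \sum_j w_j G_j$; the total atom count is at most $K\binom{k+d}{d}\lesssim [\{(a/\sigma)\vee 1\}\log(1/\varepsilon)]^d$, which is exactly the required $N_{\sigma,\varepsilon}$.

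For the kernel-error bound I would fix $x \in \real^d$ and Taylor-expand $z \mapsto \phi_\sigma(x - z)$ around the centre $c_j$ of $C_j$ up to order $k$. Moment matching kills every polynomial term against $d(P_{0,j} - G_j)$, so only the Lagrange remainder $R_k(x,z,c_j)$ contributes to
$$p_{P_0,\sigma}(x) - p_{F_\sigma,\sigma}(x) = \sum_{j=1}^K w_j \int_{C_j} R_k(x,z,c_j)\, d(P_{0,j} - G_j)(z).$$
A pointwise bound on $R_k$ would follow by writing the derivatives of $\phi_\sigma$ via probabilist Hermite polynomials $((D^\alpha\phi_\sigma)(u) = \sigma^{-d-|\alpha|}(-1)^{|\alpha|}H_\alpha(u/\sigma)\phi_1(u/\sigma))$, using a Szego-type estimate of the form $|H_\alpha(v)|\phi_1(v) \lesssim C^{|\alpha|}\sqrt{\alpha!}\,\phi_1(v/\sqrt 2)$, the crude size $\|z - c_j\| \le \sqrt{d}\,\sigma$, the multinomial identity $\sum_{|\alpha| = k+1} 1/\alpha! = d^{k+1}/(k+1)!$, and Stirling. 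The outcome is a bound of the schematic form $\sup_{z \in C_j}|R_k(x,z,c_j)| \lesssim \sigma^{-d}(C'\sqrt{d}/\sqrt{k+1})^{k+1}\phi_1((x - c_j)/(\sqrt{2}\sigma))$.

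With $C_1$ chosen large enough that $(C'\sqrt{d}/\sqrt{k+1})^{k+1} \le \varepsilon$, summing over $j$ gives $|p_{P_0,\sigma}(x) - p_{F_\sigma,\sigma}(x)| \lesssim \varepsilon\sigma^{-d}\sum_j w_j \phi_1((x - c_j)/(\sqrt{2}\sigma))$. Taking the supremum over $x$ yields the $L^\infty$ bound $\varepsilon/\sigma^d$, and integrating over $\real^d$ (each inner Gaussian integrates to a universal constant and $\sum_j w_j = 1$) yields an $L^1$ bound of order $\varepsilon$, comfortably inside the required $\varepsilon\{\log(1/\varepsilon)\}^{1/2}$. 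The main obstacle is the Taylor-remainder estimate: establishing that $k \asymp \log(1/\varepsilon)$ is enough to beat the combinatorial growth of $\sum_{|\alpha| = k+1} \|z - c_j\|^{|\alpha|}/\alpha!$ while keeping the dimensional constants visible through the Hermite/Szego bound is the delicate step. The Tchakaloff application and the final summation/integration over $j$ are bookkeeping.
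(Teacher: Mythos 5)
Your proposal follows the same high-level strategy as the paper's (sketched) proof: partition the cube $[-a,a]^d$ into sub-cubes of side $\asymp \sigma$, replace $P_0$ restricted to each sub-cube by a discrete measure matching moments up to order $\asymp \log(1/\varepsilon)$, and control the resulting kernel-mixture error by Taylor-expanding the Gaussian kernel. Where you diverge is in the technical implementation. The paper reduces to \citet{ghosal&vandervaart01,ghosal&vandervaart07}, matching the \emph{product} moments $\int z^l\,dF = \int z^l\,dF'$ for $l \in \{1,\dots,2k-2\}^d$ (so that the coordinate-wise exponential-series expansion of $\prod_i e^{-(x_i-z_i)^2/(2\sigma^2)}$ is annihilated term by term, with $\{2(k-1)\}^d+1$ atoms per cube), whereas you match \emph{total-degree} moments via Richter--Tchakaloff (giving $\binom{k+d}{d}$ atoms) and then run a genuinely multivariate Taylor expansion of $\phi_\sigma$, bounding the remainder by Cram\'er's Hermite-polynomial estimate. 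Both routes give the same atom count up to constants, and your Hermite route has the added bonus of producing a pointwise bound with built-in Gaussian decay in $\|x - c_j\|$, so that your $L_1$ bound comes out as $O(\varepsilon)$ directly, slightly sharper than the stated $\varepsilon\{\log(1/\varepsilon)\}^{1/2}$ (which in the paper's route arises from integrating an $L_\infty$ bound over an effective window of width $\sigma\sqrt{\log(1/\varepsilon)}$).

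One small imprecision worth fixing: after applying the Hermite bound $|H_\alpha(v)|\phi_1(v) \lesssim \sqrt{\alpha!}\,\phi_1(v/\sqrt2)$, the sum you actually need to control in the remainder is $\sum_{|\alpha|=k+1}|z-c_j|^\alpha\sigma^{-|\alpha|}/\sqrt{\alpha!} \le 2^{-(k+1)}\sum_{|\alpha|=k+1}1/\sqrt{\alpha!}$, not $\sum 1/\alpha!$, so the multinomial identity $\sum_{|\alpha|=m}1/\alpha!=d^m/m!$ does not apply directly. The clean fix is Cauchy--Schwarz: $\sum_{|\alpha|=k+1}1/\sqrt{\alpha!} \le \binom{k+d}{d-1}^{1/2}\bigl(d^{k+1}/(k+1)!\bigr)^{1/2}$, after which Stirling gives the $(C\sqrt d/\sqrt{k+1})^{k+1}$ decay (up to a harmless polynomial factor in $k$). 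With that patch, your argument closes and is a correct, more self-contained rendering of the approach the paper invokes by reference.
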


\begin{proof} A proof of this result can be obtained through straightforward extensions of Lemma 2 of \citet{ghosal&vandervaart07} and Lemma 3.1 of \citet{ghosal&vandervaart01} to $d$ dimensions. The only subtlety lies in replacing display (3.9) of \citet{ghosal&vandervaart01} with 
\begin{equation}
\int z^l dF(z) = \int z^l dF'(z), \;\;l \in \{1, \cdots, 2k - 2\}^d
\label{new moment}
\end{equation}
where, for a $z = (z_1, \cdots, z_d) \in \real^d$ and a $l = (l_1, \cdots, l_d) \in \inte^d$, $z^l$ denotes $z_1^{l_1} z_2^{l_2}\cdots z_d^{l_d}$. For any probability distribution $F$ on $\real^d$, there exists a discrete distribution $F'$ with at most $\{2(k - 1)\}^d + 1$ support points, satisfying (\ref{new moment}). This power of $d$ propagate all through the require extensions and appears in $N_{\sigma, \epsilon}$ in the statement of the current theorem. 
\end{proof}

\begin{corollary}
\label{cor disc approx}
Let $P_0$ be a probability measure on $[-a,a]^d \subset \real^d$. For any $\varepsilon >0$ and $\sigma > 0$, there is a discrete probability measure $F^*_\sigma$ on $[-a,a]^d$ with at most $N_{\sigma, \varepsilon} = D[\{(a / \sigma) \vee 1\} \log(1/\varepsilon)]^d$ support points from the set $\{(n_1, \cdots, n_p) \sigma\varepsilon : n_i \in \inte, |n_i| < \lceil \frac{a}{\sigma\varepsilon}\rceil, i = 1, \cdots, p\}$
such that $\|p_{P_0, \sigma} - p_{F^*_\sigma, \sigma}\|_\infty \lesssim \varepsilon /\sigma^{d}$ and $\|p_{P_0, \sigma} - p_{F^*_\sigma, \sigma}\|_1 \lesssim \varepsilon \{\log (1/ \varepsilon)\}^{1/2}$. 
\end{corollary}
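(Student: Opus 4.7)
The plan is to obtain $F^*_\sigma$ by perturbing the support points of the discrete measure $F_\sigma$ produced by Theorem \ref{lem disc approx} onto the prescribed grid $G = \{(n_1,\ldots,n_d)\sigma\varepsilon : n_i \in \inte, |n_i| < \lceil a/(\sigma\varepsilon)\rceil\}$ and then controlling the extra error introduced by this rounding. Concretely, I would start from $F_\sigma = \sum_{j=1}^N p_j \delta_{z_j}$ with $N \le N_{\sigma,\varepsilon}$ and $z_j \in [-a,a]^d$, pick for each $z_j$ a nearest grid point $z_j^* \in G$ (which satisfies $\|z_j - z_j^*\| \le \tfrac{\sqrt{d}}{2}\sigma\varepsilon$), and set $F^*_\sigma = \sum_j p_j \delta_{z_j^*}$, merging atoms whose rounded images coincide. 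The resulting measure is still supported on $G \cap [-a,a]^d$ and has at most $N_{\sigma,\varepsilon}$ atoms.

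It then remains to bound $p_{F_\sigma,\sigma} - p_{F^*_\sigma,\sigma}$ in $L_\infty$ and $L_1$. For the $L_\infty$ bound I would use the fact that $\nabla_z \phi_\sigma(x-z) = \sigma^{-2}(x-z)\phi_\sigma(x-z)$, so the mean value theorem combined with the elementary estimate $\sup_y \|y\|\phi_\sigma(y) \lesssim \sigma^{-d}$ yields $|\phi_\sigma(x-z_j) - \phi_\sigma(x-z_j^*)| \lesssim \|z_j - z_j^*\|/\sigma^{d+1} \lesssim \varepsilon/\sigma^d$ uniformly in $x$. Averaging against the weights $p_j$ gives $\|p_{F_\sigma,\sigma} - p_{F^*_\sigma,\sigma}\|_\infty \lesssim \varepsilon/\sigma^d$, which is of the same order as the bound already provided by Theorem \ref{lem disc approx}, so the two combine by the triangle inequality with no loss in rate. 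For the $L_1$ bound I would invoke the standard continuity of translation in $L_1$ for the Gaussian kernel, namely $\|\phi_\sigma(\cdot - z) - \phi_\sigma(\cdot - z')\|_1 \lesssim \|z - z'\|/\sigma$, which with $\|z_j - z_j^*\| \lesssim \sigma\varepsilon$ produces $\|p_{F_\sigma,\sigma} - p_{F^*_\sigma,\sigma}\|_1 \lesssim \varepsilon$; this is dominated by the $\varepsilon\{\log(1/\varepsilon)\}^{1/2}$ bound of Theorem \ref{lem disc approx}, so once again the triangle inequality closes the argument without changing constants in a material way.

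There is no real technical obstacle here; the corollary is essentially a gridding remark on top of Theorem \ref{lem disc approx}. The one point worth verifying carefully is that the grid spacing is coordinated with the kernel bandwidth: the choice of spacing $\sigma\varepsilon$ is exactly what makes the translation error $\|z_j - z_j^*\|/\sigma^{d+1}$ collapse to $\varepsilon/\sigma^d$ in $L_\infty$ and to $\varepsilon$ in $L_1$, matching (rather than worsening) the bounds already in hand. Checking that every $z_j \in [-a,a]^d$ indeed has a grid neighbor within $\tfrac{\sqrt{d}}{2}\sigma\varepsilon$ (including near the boundary, where the constraint $|n_i| < \lceil a/(\sigma\varepsilon)\rceil$ applies) is a minor but necessary sanity check; if $z_j$ happens to be extremely close to the boundary one may need to pick the nearest admissible grid point rather than the globally nearest one, incurring at most a constant factor loss that is absorbed into $\lesssim$.
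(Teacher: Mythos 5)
Your proof is correct and follows essentially the same route as the paper: take $F_\sigma$ from Theorem~\ref{lem disc approx} and round each support point to the nearest grid point of spacing $\sigma\varepsilon$, then control the extra error by the Lipschitz constant of $\phi_\sigma$ (of order $\sigma^{-d-1}$ for the sup norm, $\sigma^{-1}$ for the $L_1$ norm). One small discrepancy: the paper's proof asserts the sup-norm cost of the rounding is $\lesssim \varepsilon^2/\sigma^d$, while your first-order Taylor estimate gives $\lesssim \varepsilon/\sigma^d$. Your bound is the correct one (the displacements are arbitrary, so there is no cancellation to gain a second power of $\varepsilon$); fortunately $\varepsilon/\sigma^d$ is exactly what the corollary's statement requires, so the paper's overclaim in its proof is inconsequential and both arguments close in the same way.
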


\begin{proof}
First get $F_{\sigma}$ as in Theorem \ref{lem disc approx} and then move each of its support points to the nearest point on the grid $\{(n_1, \cdots, n_p) \sigma\varepsilon : n_i \in \inte, |n_i| < \lceil \frac{a}{\sigma\varepsilon}\rceil, i = 1, \cdots, p\}$ to get $F^*_\sigma$. These moves cost at most a constant times $\epsilon^2/\sigma^d$ to the supremum norm distance and at most a constant times $\epsilon$ to the $L_1$ distance.
\end{proof}

\begin{lemma}
\label{lem p0}
Let $p_0$ be a twice continuously differentiable probability density on $\real^d$ and let $P_0$ denote the corresponding probability measure. If 
$$\int (\|\nabla p_0\| / p_0)^4 p_0d\lambda  < \infty \mbox{ and } \int (\|\nabla^2 p_0\|_2 / p_0)^2 p_0 d\lambda < \infty,$$ where $\|A\|_2$ denotes the spectral norm of a matrix $A$, then $h(p_0, p_{P_0,\sigma}) \lesssim \sigma^2$.
\end{lemma}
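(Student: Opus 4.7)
The plan is to bound the squared Hellinger distance by a weighted $L^2$ distance and then exploit the $C^2$ regularity of $p_0$ via a second-order Taylor expansion whose first-order term vanishes by the symmetry of the Gaussian kernel. The starting point is the elementary inequality $(\sqrt{a}-\sqrt{b})^2 \le (a-b)^2/b$ applied with $b = p_0$, giving
\[
h^2(p_0, p_{P_0,\sigma}) \le \int \frac{(p_0(x) - p_{P_0,\sigma}(x))^2}{p_0(x)}\,dx.
\]
Since $p_{P_0,\sigma}(x) = E[p_0(x - \sigma Z)]$ for $Z$ a standard $d$-variate Gaussian, Taylor-expanding $p_0$ about $x$ to second order and using $E[Z] = 0$ to eliminate the linear term gives
\[
p_{P_0,\sigma}(x) - p_0(x) = \sigma^2 \int_0^1 (1-t)\, E\bigl[Z^\top \nabla^2 p_0(x - t\sigma Z)\, Z\bigr]\,dt.
\]

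Next, I would split $\nabla^2 p_0(x - t\sigma Z) = \nabla^2 p_0(x) + [\nabla^2 p_0(x - t\sigma Z) - \nabla^2 p_0(x)]$ so as to isolate a leading term $\tfrac{\sigma^2}{2}\Delta p_0(x)$ and a Taylor remainder $R_\sigma(x)$. The leading term is handled via $|\Delta p_0|^2 \le d^2 \|\nabla^2 p_0\|_2^2$ and the Hessian moment hypothesis, yielding
\[
\int \frac{(\sigma^2 \Delta p_0(x)/2)^2}{p_0(x)}\,dx \;\lesssim\; \sigma^4 \int (\|\nabla^2 p_0\|_2/p_0)^2\, p_0\,d\lambda \;=\; O(\sigma^4).
\]

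The remainder $R_\sigma$ is where the real work lies. Applying Cauchy--Schwarz in the $(t, Z)$ averaging yields a pointwise bound of the form $R_\sigma(x)^2 \lesssim \sigma^4 \int_0^1 E[\|Z\|^4\, \|\nabla^2 p_0(x - t\sigma Z) - \nabla^2 p_0(x)\|_2^2]\,dt$; integrating against $1/p_0(x)\,dx$ and substituting $y = x - t\sigma Z$ produces an integrand with $\|\nabla^2 p_0(y)\|_2^2/p_0(y + t\sigma Z)$, which must be compared to $\|\nabla^2 p_0(y)\|_2^2/p_0(y)$. Continuity of $\nabla^2 p_0$ forces the integrand to zero pointwise as $\sigma \to 0$, and the Hessian moment condition supplies an integrable envelope; the gradient moment condition $\int (\|\nabla p_0\|/p_0)^4 p_0\, d\lambda < \infty$ enters to control the ratio $p_0(y)/p_0(y + t\sigma Z)$ via the log-Lipschitz identity $\log p_0(y) - \log p_0(y + u) = -\int_0^1 u^\top \nabla \log p_0(y + s u)\, ds$ combined with Cauchy--Schwarz against the Gaussian weight. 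The main obstacle is making this domination uniform enough in $\sigma$ to apply dominated convergence and conclude $\int R_\sigma^2/p_0\,d\lambda = o(\sigma^4)$, which together with the leading-term bound yields $h(p_0, p_{P_0,\sigma}) \lesssim \sigma^2$.
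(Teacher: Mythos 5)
There is a genuine gap, and it starts at the very first line. The pointwise inequality $(\sqrt a-\sqrt b)^2\le (a-b)^2/b$ is correct, but the resulting bound
$h^2(p_0,p_{P_0,\sigma})\le\int (p_0-p_{P_0,\sigma})^2/p_0\,dx$ is the $\chi^2$-divergence of $p_{P_0,\sigma}$ from $p_0$, and this quantity equals $+\infty$ whenever $p_0$ is compactly supported and $\sigma>0$: outside $\supp(p_0)$ the integrand is $p_{P_0,\sigma}^2/0=+\infty$, and $p_{P_0,\sigma}>0$ everywhere. Compactly supported $p_0$ is exactly the case in which this lemma is invoked (Proposition \ref{ordinarysmooth}), so the bound you are trying to prove is vacuously true for the wrong reason while the quantity you are trying to estimate is infinite. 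More broadly, $\chi^2$ is far more sensitive to tail and support mismatch than Hellinger, so replacing Hellinger by $\chi^2$ is precisely the step that a proof of this lemma must avoid. Even for full-support $p_0$, your argument openly leaves open the control of the ratio $p_0(y)/p_0(y+t\sigma Z)$; the stated moment hypotheses do not obviously give the uniform-in-$\sigma$ envelope you would need, and the target $\int R_\sigma^2/p_0=o(\sigma^4)$ via dominated convergence would not by itself yield the uniform $\lesssim\sigma^2$ rate asserted by the lemma.

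The paper takes a different route that sidesteps both problems. Instead of bounding Hellinger by $\chi^2$ and expanding $p_{P_0,\sigma}$, it expands $\sigma\mapsto p_\sigma^{1/2}(x)$ (the square root of the smoothed density, as a function of the bandwidth) to second order with integral remainder; because $\dot p_0(x)=-\int y'\nabla p_0(x)\phi(y)\,dy=0$, the Hellinger integrand $\bigl(p_\sigma^{1/2}-p_0^{1/2}\bigr)^2$ is directly the square of a second-order Taylor remainder, already of size $\sigma^4$. The denominator that then appears is $p_{s\sigma}(x)$, not $p_0(x)$. The decisive trick is to apply Cauchy--Schwarz (for the $\ddot p$ term) and H\"older with exponents $4$ and $4/3$ (for the $\dot p^2$ term) inside the Gaussian average after multiplying and dividing by the appropriate power of $p_0(x-\sigma y)$; this pulls out a factor of $p_\sigma(x)$ that exactly cancels the $p_\sigma(x)$ in the denominator. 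A change of variables $x\mapsto x-\sigma y$ then turns the remaining integral into precisely $\int(\|\nabla^2 p_0\|_2/p_0)^2p_0\,d\lambda\cdot\int\|y\|^4\phi(y)\,dy$ and $\int(\|\nabla p_0\|/p_0)^4p_0\,d\lambda\cdot\int\|y\|^4\phi(y)\,dy$, uniformly in $\sigma$, with no density-ratio term and no soft $o(\sigma^4)$ argument. That cancellation mechanism is the missing idea in your proposal.
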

\begin{proof}
The proof below closely follows the proof of Lemma 4 in \cite{ghosal&vandervaart07} with some adaptation needed to handle $d > 1$. By the assumptions on $p_0$, $p_0$ and $\nabla p_0$ are uniformly bounded and hence $p_\sigma(x) := p_{P_0,\sigma}(x) = \int p_0(x - \sigma y) \phi(y) dy$ is twice continuously differentiable in $\sigma$ with derivatives $\dot p_\sigma(x)$ and $\ddot p_\sigma(x)$ given by
\begin{align*}
\dot p_\sigma(x) & = -\int y'\nabla p_0(x - \sigma y) \phi(y) dy\\
\ddot p_\sigma(x) & = \int \{y' \nabla^2 p_0(x - \sigma y) y \}\phi(y) dy.
\end{align*}
Using Taylor's theorem with the integral form of the remainder we have
$$p_\sigma^{1/2}(x) - p_0^{1/2}(x) = \sigma\frac{\dot p_0(x)}{2p_0^{1/2}(x)} + \frac{1}{2} \sigma^2 \int_0^1 \left( \frac{\ddot p_{s\sigma}(x)}{p_{s\sigma}^{1/2}(x)} - \frac{1}{2}\frac{\dot p_{s\sigma}^2(x)}{p_{s\sigma}^{3/2}(x)}\right)(1 - s) ds.$$
Because $\dot p_0(x) = -\int y' \nabla p_0(x) \phi(y) dy = 0$ for every $x$, we obtain
\begin{align*}
h^2(p_\sigma, p_0) & = \frac{1}{4} \sigma^4 \int \left( \int_0^1 \left( \frac{\ddot p_{s\sigma}(x)}{p_{s\sigma}^{1/2}(x)} - \frac{1}{2}\frac{\dot p_{s\sigma}^2(x)}{p_{s\sigma}^{3/2}(x)}\right)(1 - s) ds \right)^2 dx\\
& \le  \frac{1}{4} \sigma^4 \int_0^1 \int \left[ \left(\frac{\ddot p_{s\sigma}(x)}{p_{s\sigma}^{1/2}(x)}\right)^2 + \frac{1}{4}\left(\frac{\dot p_{s\sigma}^2(x)}{p_{s\sigma}^{3/2}(x)}\right)^2\right]dx \times (1 - s)^2 ds
\end{align*}
Now, for any $\sigma$, by the Cauchy-Schwarz inequality,
\begin{align*}
\ddot p_\sigma^2(x) & = \left(\frac{y'\nabla^2 p_0(x - \sigma y) y(y)}{p_0^{1/2}(x - \sigma y)} p_0^{1/2}(x - \sigma y) \phi(y) dy \right)^2\\
& \le \int \frac{(y' \nabla^2 p_0(x - \sigma y) y)^2}{p_0(x - \sigma y)}\phi(y) dy \times p_\sigma(x)\\
& \le \int \frac{\| \nabla^2 p_0(x - \sigma y) \|_2^2}{p_0(x - \sigma y)}\|y\|^4\phi(y) dy \times p_\sigma(x)
\end{align*}
and hence $\int (\ddot p_{s\sigma}(x) / p_{s\sigma}(x))^2 dx \le \int (\|\nabla^2 p_0\|_2 / p_0)^2 p_0 d\lambda \times \int \|y^4\|\phi(y) dy\lesssim 1$.

By H\"older's inequality with $p = 4$ and $q = 4/3$,
\begin{align*}
\dot  p^4_\sigma(x) & = \left(\int \frac{y'\nabla p_0(x - \sigma y)}{p_0^{3/4}(x - \sigma y)} p_0^{3/4}(x - \sigma y)\phi(y) dy\right)^4\\
& \le \left(\int \frac{\{y'\nabla p_0(x - \sigma y)\}^4}{p_0^3(x - \sigma y)} \phi(y)dy\right) \times \left(\int p_0(x - \sigma y)\phi(y) dy\right)^3\\
& \le \left(\int \frac{\|\nabla p_0(x - \sigma y)\|^4}{p_0^3(x - \sigma y)}\|y\|^4 \phi(y)dy\right) \times p_\sigma^3(x)
\end{align*}
and hence $\int (\dot p^2_{s\sigma}(x) / p^{3/2}_{s\sigma}(x))^2dx \le \int (\|\nabla p_0\| / p_0)^4 p_0d\lambda \times \int \|y\|^4\phi(y) dy \lesssim 1$.
\end{proof}

\begin{lemma}
\label{lem diff}
Let $V_0, V_1, \cdots, V_N$ be a partition of $\real^d$ and $F' = \sum_{j = 1}^N p_j \delta_{z_j}$ a probability measure on $\real^d$ with $z_j \in V_j$, $j = 1, \cdots, N$. Then, for any probability measure $F$ on $\real^d$, and any $\sigma > 0$,
\begin{align*}
\|p_{F, \sigma} - p_{F', \sigma}\|_\infty & \lesssim \frac{1}{\sigma^{d + 1}}\max_{1 \le j \le N} \diam(V_j) + \frac{1}{\sigma^d}\sum_{j = 1}^N |F(V_j) - p_j|\\
\|p_{F, \sigma} - p_{F', \sigma}\|_1 & \lesssim \frac{1}{\sigma}\max_{1 \le j \le N} \diam(V_j) + \sum_{j = 1}^N |F(V_j) - p_j|
\end{align*}
where $\diam(A) := \sup\{\|z_1 - z_2\| : z_1, z_2 \in A\}$ denotes the diameter of a set $A$.
\end{lemma}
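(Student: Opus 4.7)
The plan is to decompose the difference $p_{F,\sigma}(x) - p_{F',\sigma}(x)$ by slicing the integral defining $p_{F,\sigma}$ over the partition $V_0, V_1, \dots, V_N$. Since $z_j \in V_j$, for each $j \geq 1$ I would write
$$\int_{V_j} \phi_\sigma(x-z)\, dF(z) - p_j \phi_\sigma(x-z_j) = \int_{V_j}\bigl[\phi_\sigma(x-z) - \phi_\sigma(x-z_j)\bigr] dF(z) + [F(V_j) - p_j]\phi_\sigma(x - z_j),$$
treating $V_0$ (where $F'$ puts no mass) separately via $F(V_0) = 1 - \sum_{j \ge 1} F(V_j) = \sum_{j \ge 1} p_j - \sum_{j \ge 1} F(V_j) \le \sum_{j \ge 1} |F(V_j) - p_j|$, so its contribution is absorbed into the mass-mismatch term.

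Next I would bound the two types of terms separately using elementary properties of the Gaussian kernel. For the sup-norm claim, the first term is controlled by $\|\nabla \phi_\sigma\|_\infty \lesssim \sigma^{-(d+1)}$ together with $\|z - z_j\| \leq \diam(V_j)$, yielding the contribution $\sigma^{-(d+1)} \max_j \diam(V_j) \cdot F(V_j)$ which sums (over $j$) to the stated first bound after using $\sum_j F(V_j) \le 1$. The second term is bounded by $\|\phi_\sigma\|_\infty \cdot |F(V_j) - p_j| \lesssim \sigma^{-d}|F(V_j) - p_j|$ and summed. For the $L_1$ claim, I would swap order of integration: the first term is bounded via
$$\int \Bigl|\int_{V_j}[\phi_\sigma(x-z)-\phi_\sigma(x-z_j)]dF(z)\Bigr| dx \le \int_{V_j} \|\phi_\sigma(\cdot - z) - \phi_\sigma(\cdot - z_j)\|_1\, dF(z),$$
and the standard translation estimate $\|\phi_\sigma(\cdot - z) - \phi_\sigma(\cdot - z_j)\|_1 \lesssim \|z - z_j\|/\sigma$ (a consequence of $\phi_\sigma$ being a scaled Gaussian with $\|\nabla \phi_\sigma\|_1 \lesssim 1/\sigma$) gives the analogous sum. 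The second term integrates exactly to $|F(V_j) - p_j|$.

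The argument is essentially bookkeeping once the two kernel estimates are in hand, so there is no serious obstacle; the only mildly delicate point is remembering to dispose of the leftover set $V_0$ through the mass-conservation identity above, rather than trying to apply a Lipschitz bound on a set where $F'$ has no anchor point $z_j$. If desired, I could state the two kernel estimates $\|\nabla \phi_\sigma\|_\infty \lesssim \sigma^{-(d+1)}$ and $\|\phi_\sigma(\cdot - u) - \phi_\sigma(\cdot)\|_1 \lesssim \|u\|/\sigma$ as a one-line lemma (both following from $\phi_\sigma(x) = \sigma^{-d}\phi_1(x/\sigma)$) before assembling the decomposition above.
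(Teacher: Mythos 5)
Your decomposition over the partition $\{V_j\}$ --- splitting the $j$th slice into a kernel-translation term controlled by $\|\nabla\phi_\sigma\|_\infty \lesssim \sigma^{-(d+1)}$ (for the sup norm) or $\|\phi_\sigma(\cdot-u)-\phi_\sigma\|_1 \lesssim \|u\|/\sigma$ (for $L_1$), plus a mass-mismatch term, with the leftover cell $V_0$ disposed of via the mass-conservation identity $F(V_0) = \sum_{j\ge 1}(p_j - F(V_j)) \le \sum_{j\ge 1}|F(V_j)-p_j|$ --- is precisely the argument in Lemma 5 of Ghosal and van der Vaart (2007), to which the paper defers for this result. Your proof is correct and takes essentially the same route.
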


\begin{proof} See the proof of Lemma 5 of \cite{ghosal&vandervaart07}. 
\end{proof}

\begin{lemma}[Lemma 10 of \cite{ghosal&vandervaart07}]
\label{lem dir}
Let $(X_1, \cdots, X_N) \sim {\text Dir}(\alpha_1, \cdots, \alpha_N)$, with $0 < \alpha_j \le 1$, $\sum_{i = 1}^N \alpha_j = m$. Fix $a > 0$, $b > 0$. Then, there exist constants $c$ and $C$ that only depend $a$, $b$ and $m$ such that for any $\varepsilon \in (0, \min(1/4, a \{\min_j\alpha_j\}^b, 1 / N))$, 
$$P\left(\sum_{j = 1}^N |X_j - p_j| \le 2\varepsilon, \min_j X_j \ge \varepsilon^2 / 2\right) \ge C \exp\left(-c N \log \frac{1}{\varepsilon}\right)$$
\end{lemma}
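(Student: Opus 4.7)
The plan is to lower-bound the Dirichlet small-ball probability by producing an explicit box inside the simplex on which both events automatically hold, and then integrating the explicit Dirichlet density over that box. The key structural input is that the density $f(x) = \Gamma(m)/\prod_j \Gamma(\alpha_j) \cdot \prod_j x_j^{\alpha_j - 1}$ satisfies $\prod_j x_j^{\alpha_j - 1} \ge 1$ on the simplex (because $x_j \le 1$ and $\alpha_j - 1 \le 0$), so $f$ is bounded below by the normalizing constant $C_\alpha = \Gamma(m)/\prod_j \Gamma(\alpha_j)$. This turns a density integral into a pure volume calculation.

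First I would shift $(p_1,\ldots,p_N)$ to a reference vector $q$ bounded away from the simplex boundary, for instance $q_j = (p_j \vee \varepsilon^2)/\sum_i(p_i \vee \varepsilon^2)$. Because $\varepsilon \le 1/N$, one gets $q_j \ge \varepsilon^2/2$, $\sum_j q_j = 1$, and $\sum_j |q_j - p_j| \lesssim N\varepsilon^2 \lesssim \varepsilon$ by a direct estimate. Next, restrict to the box $E = \{x:\ |x_j - q_j| \le \varepsilon^2/(2N),\ j = 1,\ldots,N-1\}$ inside the simplex. On $E$ the minimum condition $X_j \ge q_j - \varepsilon^2/(2N) \ge \varepsilon^2/2$ is immediate, and the $\ell_1$ bound $\sum_j|X_j - p_j| \le \sum_j|X_j - q_j| + \sum_j |q_j - p_j| \le \varepsilon^2/2 + O(\varepsilon) \le 2\varepsilon$ follows (up to an inessential rescaling of $\varepsilon$).

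The probability is then bounded below by $C_\alpha$ times the $(N-1)$-dimensional Lebesgue volume of $E$, which is of order $(\varepsilon^2/(2N))^{N-1}$. Taking logs, $\log\mathrm{vol}(E) \gtrsim -(N-1)\log(N/\varepsilon^2) \gtrsim -N\log(1/\varepsilon)$ after using $N \le 1/\varepsilon$. For the normalizing constant, use the elementary bound $\Gamma(\alpha_j) \le 1/\alpha_j$ on $(0,1]$ to get $-\log C_\alpha \le \sum_j \log(1/\alpha_j) + O(1) \le N\log(1/\min_j\alpha_j) + O(1)$, and invoke the hypothesis $\varepsilon \le a(\min_j\alpha_j)^b$ to bound $\log(1/\min_j\alpha_j)$ by $(1/b)\log(a/\varepsilon) \lesssim \log(1/\varepsilon)$. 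Combining the two contributions yields $\log P(X \in E) \ge -cN\log(1/\varepsilon)$ for a constant $c = c(a,b,m)$, which is exactly the stated bound.

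The delicate step, and the reason the hypothesis has the specific three-part form it does, is the uniform control of $C_\alpha$. For very small $\alpha_j$ the factors $\Gamma(\alpha_j) \sim 1/\alpha_j$ can blow up, and only the calibration $\min_j \alpha_j \gtrsim \varepsilon^{1/b}$ forces $\prod_j(1/\alpha_j)$ back down to an $\exp(O(N\log(1/\varepsilon)))$ factor that the volume term can absorb. The remaining restrictions $\varepsilon \le 1/N$ and $\varepsilon \le 1/4$ serve the complementary roles of keeping the reference $q$ close to $p$ and of absorbing small numerical constants.
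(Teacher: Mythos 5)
The paper does not supply its own proof of this lemma; it simply cites Lemma 10 (Lemma A.2) of \citet{ghosal\&vandervaart07}. Your argument follows the same standard small-ball route used there: lower-bound the Dirichlet density by its normalizing constant (valid because $\alpha_j\le 1$ makes $\prod_j x_j^{\alpha_j-1}\ge 1$ on the simplex), integrate over an explicit box inside the target event, control the box volume, and control $\prod_j\Gamma(\alpha_j)$ via $\Gamma(\alpha)\le 1/\alpha$ together with the calibration $\varepsilon\le a(\min_j\alpha_j)^b$. That structure, and the explanation of why all three hypotheses on $\varepsilon$ are needed, is exactly right.

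There is one genuine (though easily repaired) gap: you verify $\min_j X_j\ge\varepsilon^2/2$ only for the $N-1$ coordinates explicitly constrained by the box, but the last coordinate $X_N = 1-\sum_{j<N}X_j$ is not constrained by $E$ and must be checked separately. One has $|X_N-q_N|\le(N-1)\varepsilon^2/(2N)<\varepsilon^2/2$, and with your choice $q_j=(p_j\vee\varepsilon^2)/S$, $S\le 1+N\varepsilon^2\le 5/4$, the best lower bound you can assert is $q_N\ge 4\varepsilon^2/5$, which gives $X_N\ge 4\varepsilon^2/5-\varepsilon^2/2=3\varepsilon^2/10<\varepsilon^2/2$. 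The fix is to raise the floor in defining $q$ (e.g., $q_j=(p_j\vee 2\varepsilon^2)/S'$), or equivalently run the argument at $\varepsilon/\sqrt 2$, after which $q_j\ge\varepsilon^2$ and the $j=N$ case closes; the $\ell_1$ bookkeeping also needs a small shrink of the box half-width so that $\sum_j|X_j-q_j|+\sum_j|q_j-p_j|$ lands below $2\varepsilon$ rather than at roughly $2\varepsilon+\varepsilon^2$. These are the constant adjustments you already anticipated, but they do need to be carried out, and in particular the untreated $N$th coordinate is a real omission rather than cosmetic. The volume and $\Gamma$-function estimates, and the conversion $\log N\le\log(1/\varepsilon)$ from $\varepsilon\le 1/N$, are all correct.
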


\begin{lemma}
\label{lem dist}
For every pair of probability densities $p$ and $q$,
\begin{align*}
P \log \frac{p}{q} & \lesssim h^2(p, q) \left(1 + \log\left\|\frac{p}{q}\right\|_\infty\right),\\[5pt]
P \left(\log \frac{p}{q}\right)^2 & \lesssim h^2(p, q) \left(1 +\log \left\|\frac{p}{q}\right\|_\infty\right)^2,\\[5pt]
\frac12\|p - q\|_1 &\le h(p, q) \le \|p - q\|^{1/2}_1.
\end{align*}
\end{lemma}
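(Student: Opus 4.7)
The plan is to prove the three statements more or less independently, starting with the Hellinger--$L_1$ equivalence (3) since it is the most standard. Using the factorization $p - q = (\sqrt p - \sqrt q)(\sqrt p + \sqrt q)$ together with the pointwise inequality $|\sqrt p - \sqrt q| \le \sqrt p + \sqrt q$, one obtains
$$h^2(p,q) = \int (\sqrt p - \sqrt q)^2\, d\lambda \;\le\; \int |\sqrt p - \sqrt q|(\sqrt p + \sqrt q)\, d\lambda \;=\; \|p - q\|_1,$$
which gives $h(p,q) \le \|p - q\|_1^{1/2}$. For the converse, applying the Cauchy--Schwarz inequality to the same factorization yields
$$\|p - q\|_1^2 \;\le\; h^2(p,q) \int (\sqrt p + \sqrt q)^2\, d\lambda \;\le\; 2 h^2(p,q)\left(\int p\, d\lambda + \int q\, d\lambda\right) \;=\; 4 h^2(p,q),$$
so $\tfrac12 \|p - q\|_1 \le h(p,q)$.

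For (1) and (2), the strategy is to reduce both log-integrals to the Hellinger integral by a pointwise inequality in the likelihood ratio $r := p/q$, paying only a (squared) logarithmic factor in $M := \log \|p/q\|_\infty$. Observing that $\int q (r-1)\, d\lambda = 0$, one may rewrite $K(p,q) = \int q\, (r \log r - r + 1)\, d\lambda$ and $V(p,q) = \int q \cdot r (\log r)^2\, d\lambda$. I would then establish the two elementary one-variable inequalities, valid for every $r \in (0, e^M]$,
$$r \log r - r + 1 \;\le\; C_1 (\sqrt r - 1)^2 (1 + M), \qquad r (\log r)^2 \;\le\; C_2 (\sqrt r - 1)^2 (1 + M)^2,$$
for absolute constants $C_1, C_2$. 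Integrating these against $q\, d\lambda$ and using $\int q (\sqrt r - 1)^2\, d\lambda = h^2(p,q)$ immediately gives (1) and (2).

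The main obstacle is verifying the two one-variable inequalities, which requires a mild case analysis in $u := \sqrt r - 1 \in (-1, e^{M/2}-1]$. Near $u = 0$, Taylor expansion around $r = 1$ gives $r \log r - r + 1 = 2 u^2 + \tfrac{2}{3} u^3 + O(u^4)$ and $r (\log r)^2 = 4 u^2 + 4 u^3 + O(u^4)$, so on a fixed neighborhood of $r = 1$ the inequalities hold trivially with constants independent of $M$. For $r \ge 4$ the bound $(\sqrt r - 1)^2 \ge r/4$ together with $\log r \le M$ gives $r \log r - r + 1 \le r M \lesssim (\sqrt r - 1)^2 M$ and $r (\log r)^2 \le r M^2 \lesssim (\sqrt r - 1)^2 M^2$; for $r$ bounded away from both $1$ and $\infty$, both sides are bounded and the inequalities hold by continuity of the ratios. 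This bookkeeping is all that stands between the claim and a mechanical verification; alternatively, one can simply invoke Lemma~8 of \citet{ghosal&vandervaart07}, which records (1)--(2) up to harmless constants.
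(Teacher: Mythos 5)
The paper does not actually prove this lemma; it simply cites Lemma~8 of \citet{ghosal&vandervaart07} for the two Kullback--Leibler/variance bounds and a textbook for the $L_1$--Hellinger comparison. Your proposal therefore does more work than the paper, and it is substantially correct. The $L_1$--Hellinger argument via the factorization $p - q = (\sqrt p - \sqrt q)(\sqrt p + \sqrt q)$ with Cauchy--Schwarz and $\int(\sqrt p + \sqrt q)^2 \le 4$ is the standard one. For the first two bounds, writing $K(p,q) = \int q\,(r\log r - r + 1)\,d\lambda$ and $V(p,q) = \int q\,r(\log r)^2\,d\lambda$ with $r = p/q$, reducing to the scalar inequalities
\[
r\log r - r + 1 \;\lesssim\; (\sqrt r - 1)^2\,(1 + M), \qquad r(\log r)^2 \;\lesssim\; (\sqrt r - 1)^2\,(1 + M)^2
\]
on $(0, e^M]$, and then integrating against $q$ using $\int q(\sqrt r - 1)^2\,d\lambda = h^2(p,q)$ is exactly the mechanism behind the cited lemma. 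Your Taylor expansions at $r = 1$ (ratios tending to $2$ and $4$ respectively) and the large-$r$ bound $(\sqrt r - 1)^2 \ge r/4$ with $\log r \le M$ are correct; the ``bookkeeping'' you defer is just the observation that each ratio, continuously extended to $r = 1$, is bounded on any compact subset of $(0,\infty)$ and has a finite limit as $r \to 0^+$, so compactness closes the argument with constants independent of $M$. In short, you have reconstructed the content of the cited source where the paper only points to it; you also note the citation shortcut, which is what the paper in fact does.
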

\begin{proof}
See Lemma 8 of \citet{ghosal&vandervaart07} for the first two inequalities. The last set is well known, \cite[e.g.,][page 212]{vandervaartbook}.
\end{proof}

\bibliographystyle{chicago}
\bibliography{/Users/suryatokdar/Desktop/Utils/TokdarReferences}
\end{document}